\declaretheorem[name=Theorem, numberwithin=section]{theorem}
\declaretheorem[name=Lemma, sibling=theorem]{lemma}
\declaretheorem[name=Proposition, sibling=theorem]{proposition}
\declaretheorem[name=Conjecture, sibling=theorem]{conjecture}
\crefname{conjecture}{Conjecture}{Conjectures}
\crefname{hypothesis}{Hypothesis}{Hypothesis}
\crefname{claim}{Claim}{Claims}
\theoremstyle{definition}
\title{Strengthening a theorem of Meyniel}
\author{Quentin Deschamps\thanks{Univ. Lyon, Université Lyon 1, LIRIS UMR CNRS 5205, F-69621, Lyon, France.} \quad
  Carl Feghali\thanks{Univ. Lyon, EnsL, UCBL, CNRS, LIP, F-69342, Lyon Cedex 07, France. } \quad
  Franti\v{s}ek Kardo\v{s}\thanks{Faculty of Mathematics, Physics, and Informatics, Comenius University, Bratislava, Slovakia.} \quad \\
  Clément Legrand-Duchesne\thanks{LaBRI, CNRS, Université de Bordeaux, Bordeaux, France.} \quad
  Théo Pierron$^*$}
\begin{document}

\maketitle
\begin{abstract}
  For an integer $k \geq 1$ and a graph $G$, let $\mathcal{K}_k(G)$ be the graph
  that has vertex set all proper $k$-colorings of $G$, and an edge between two
  vertices $\alpha$ and~$\beta$ whenever the coloring~$\beta$ can be obtained
  from $\alpha$ by a single Kempe change.  A theorem of Meyniel from 1978 states
  that $\mathcal{K}_5(G)$ is connected with diameter $O(5^{|V(G)|})$ for every
  planar graph $G$. We significantly strengthen this result, by showing that
  there is a positive constant $c$ such that $\mathcal{K}_5(G)$ has diameter
  $O(|V(G)|^c)$ for every planar graph $G$.

\end{abstract}
\section{Introduction}

Let $k$ be a positive integer, and let $G$ be a graph.  A \emph{Kempe chain} in
colors $\{a, b\}$ is a maximal connected subgraph $B$ of $G$ such that every
vertex of $B$ has color $a$ or $b$.  By swapping the colors $a$ and $b$ on $B$,
a new coloring is obtained. This operation is called a \emph{K-change}.  Let
$\mathcal{K}_k(G)$ be the graph that has vertex set all proper $k$-colorings of
$G$, and an edge between two vertices $\alpha$ and~$\beta$ whenever the
coloring~$\beta$ can be obtained from $\alpha$ by a single K-change.

A graph $G$ is $d$-degenerate if every subgraph of $G$ contains a vertex of
degree at most $d$. Las Vergnas and Meyniel \cite{meyniel3} proved the following
result.

\begin{theorem}\label{thm:meyniel}
  If $G$ is $d$-degenerate and $k > d$ is an integer, then $\mathcal{K}_k(G)$ is
  connected.
\end{theorem}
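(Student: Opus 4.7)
The plan is to proceed by induction on $n = |V(G)|$. The base case $n = 1$ is immediate, since any two one-vertex colorings are Kempe-equivalent via a singleton swap. For the inductive step, I would pick a vertex $v$ of degree at most $d$ (which exists since $G$ is $d$-degenerate) and set $G' = G - v$. Since $G'$ is again $d$-degenerate, the induction hypothesis gives that $\mathcal{K}_k(G')$ is connected.

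Given two proper $k$-colorings $\alpha$ and $\beta$ of $G$, I would first align $v$'s color with $\beta(v)$ by performing a single Kempe swap on the $\{\alpha(v), \beta(v)\}$-chain of $G$ through $v$, producing a coloring $\alpha_1$ with $\alpha_1(v) = \beta(v)$. It then suffices to transform $\alpha_1$ into $\beta$ by Kempe changes that preserve the color of $v$. Restricting to $G'$, the induction hypothesis yields a sequence of Kempe changes in $G'$ from $\alpha_1|_{G'}$ to $\beta|_{G'}$, and the main task is to \emph{lift} this sequence to $G$ without disturbing $v$.

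A single lifting step proceeds as follows. For a Kempe change in $G'$ on a chain $B'$ in colors $\{a,b\}$: if $v$'s current color lies outside $\{a,b\}$, or if $v$ has no neighbor in $B'$, then $B'$ is already a Kempe chain of $G$ and the same swap applies without touching $v$. Otherwise $v$ has color $a$ (say) and a neighbor in $B'$, so the maximal $\{a,b\}$-connected subgraph of $G$ extending $B'$ also swallows $v$. Using that $v$ has at most $d < k$ neighbors, one can pick a color $c^*$ absent from $N(v)$; when $c^* \notin \{a,b\}$, the Kempe chain of $G$ in colors $\{a,c^*\}$ through $v$ is just $\{v\}$ (no neighbor of $v$ has color $a$ or $c^*$), so a singleton swap temporarily moves $v$ to $c^*$, after which the intended swap on $B'$ applies, and a symmetric singleton swap restores the color of $v$.

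The main obstacle I anticipate lies in the tight regime $k = d+1$, where the neighbors of $v$ may use all colors except $a$, forcing $c^* = a$ and leaving no room for the local detour above. In that case, the natural remedy is to swap the larger $G$-chain containing $v$, which in $G'$ amounts to executing the desired swap on $B'$ together with swaps on the other $G'$-chains attached to $v$, plus a change of the color of $v$. Reconciling this with the $G'$-sequence given by the induction hypothesis --- either by re-ordering or augmenting it so that the chains around $v$ can be flipped together in a coordinated batch, or by absorbing the extra $G'$-swaps into subsequent lifting steps and correcting $v$'s color at the end --- is where I expect the heart of the technical work to lie.
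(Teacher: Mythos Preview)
The paper does not prove this theorem: it is stated as a result of Las Vergnas and Meyniel with a citation, and the paper only refers to the diameter bound $O(d^{|V(G)|})$ that their argument yields. So there is no proof here to compare yours against; I comment on your argument directly.

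Your inductive framework is the standard one and is correct in outline, but the execution contains a genuine error, and the obstacle you anticipate in the tight case is in fact not an obstacle at all.

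The error is in your ``restore'' step. After you move $v$ from colour $a$ to $c^*$ and then perform the $\{a,b\}$-swap on $B'$, the neighbour of $v$ that lay in $B'$ (which had colour $b$) now has colour $a$. Hence the $\{a,c^*\}$-chain of $G$ through $v$ is no longer the singleton $\{v\}$, and you cannot swap $v$ back to $a$ without disturbing $G'$. The fix is simply not to restore: drop the requirement that $v$ keep colour $\beta(v)$ throughout, and repair $v$ only once at the very end. This is painless, since after the whole $G'$-sequence has been lifted, $G-v$ is coloured by $\beta|_{G-v}$, so both $\beta(v)$ and the current colour of $v$ are absent from $N(v)$, and a single singleton swap finishes. (Your preliminary alignment of $v$ to $\beta(v)$ is then unnecessary.)

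As for the tight case $c^*=a$ that worried you: here every colour in $\{1,\dots,k\}\setminus\{a\}$ appears on $N(v)$, which together with $\deg(v)\le d<k$ forces $\deg(v)=k-1=d$ with each such colour appearing exactly once. In particular $v$ has a \emph{unique} neighbour of colour $b$, and since we are in the sub-case where $v$ has a neighbour in $B'$, that neighbour lies in $B'$. Thus the $\{a,b\}$-chain of $G$ through $v$ is precisely $\{v\}\cup B'$. Flipping it in $G$ realises the desired swap on $B'$ in $G'$ (and recolours $v$, which we no longer mind). No batching, reordering, or coordination of the $G'$-sequence is needed.
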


Meyniel \cite{meyniel1} strengthened this result for planar graphs by proving

\begin{theorem}\label{thm:mm}
  If $G$ is a planar graph, then $\mathcal{K}_5(G)$ is connected.
\end{theorem}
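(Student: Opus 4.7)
I would prove the theorem by induction on $n = |V(G)|$, using the fact that every planar graph contains a vertex of degree at most $5$. The base case is immediate, so I focus on the inductive step. Fix a vertex $v$ of degree $d \leq 5$ in $G$ and two proper $5$-colorings $\alpha, \beta$ of $G$. The plan is to reduce to the induction hypothesis applied to the planar graph $G - v$: first transform both $\alpha$ and $\beta$ into $5$-colorings $\alpha'$ and $\beta'$ that both assign color $1$ to $v$; then lift a Kempe sequence in $G - v$ from $\alpha'|_{G-v}$ to $\beta'|_{G-v}$ (which exists by the induction hypothesis) to a Kempe sequence in $G$, closing with at most one trivial Kempe swap at $v$ to match $\beta$.

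The lifting rests on the following observation: a $\{a,b\}$-Kempe change applied to a chain $C \subseteq G - v$ is automatically a valid Kempe change in $G$ whenever the current color of $v$ lies outside $\{a,b\}$, since then $v$ can neither belong to nor extend the chain. When $v$'s current color lies in $\{a,b\}$, I would precede the lifted step by an auxiliary \emph{trivial} Kempe swap of $\{v\}$ alone, moving $v$ to some color $c \in \{1,\dots,5\}\setminus\{a,b\}$ absent from the neighborhood of $v$. Such a free color exists automatically when $d \leq 4$, by pigeonhole.

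The main obstacle, and the reason for restricting to planar graphs, is the case $d = 5$ in which the five neighbors of $v$ use all four colors distinct from $v$'s current color, leaving no free color other than $v$'s own. To handle this I would invoke the classical Kempe-chain argument from the proof of the Five Color Theorem: order the neighbors $v_1,\dots,v_5$ cyclically around $v$ via the planar embedding and examine Kempe chains in the color pairs appearing on pairs of non-consecutive neighbors. A Jordan-curve argument leveraging planarity shows that at least one such chain fails to link its two target neighbors, and swapping it frees a color at $v$. A subtlety is that this preparatory swap can disturb the current coloring of $G - v$ and derail the running induction sequence; this is handled by re-applying the induction hypothesis in $G - v$ to re-route from the new restriction to the target $\beta'|_{G-v}$. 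Iterating lifting and rerouting along the induction sequence eventually yields a Kempe path from $\alpha$ to $\beta$ in $G$, completing the proof.
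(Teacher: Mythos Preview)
Your inductive skeleton is exactly the classical one, but there are two genuine gaps.

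\textbf{The pigeonhole claim for $d\le 4$ is not true as stated.} With $v$ coloured $a$ and an $\{a,b\}$-step to lift, the four neighbours of $v$ may well carry the colours $b,c_1,c_2,c_3$, so no colour in $\{1,\dots,5\}\setminus\{a,b\}$ is free. The lifting for $d\le 4$ does work, but for a subtler reason: the only obstruction to performing the $\{a,b\}$-swap directly in $G$ is that $v$ has $b$-neighbours lying in two different $\{a,b\}$-chains of $G-v$; in that case two neighbours share a colour, so $N(v)$ uses at most three colours and a free colour outside $\{a,b\}$ really does exist. You need this case split, not a blanket pigeonhole.

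\textbf{The rerouting in the $d=5$ case has no termination argument, and this is the real hole.} When all four colours $b,c_1,c_2,c_3$ appear on the five neighbours, freeing a colour at $v$ forces a Kempe swap that alters $G-v$. You then ``re-apply the induction hypothesis'' to obtain a fresh sequence in $G-v$ towards $\beta'|_{G-v}$ and resume lifting. But nothing prevents the very first step of this new sequence from again being an $\{a,b\}$-swap blocked by $v$, forcing another disturbance and another reroute, ad infinitum. There is no decreasing potential here; ``eventually'' is unjustified. This is precisely why the $O(5^{|V|})$ bound quoted in the paper does not follow from naive lifting.

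The standard remedy (and the device used in the paper's proof of the stronger Theorem~\ref{theorem}) is to avoid lifting through a degree-$5$ vertex altogether: one first performs a bounded number of Kempe changes so that two suitably placed neighbours of $v$ receive the same colour, then \emph{identifies} them. In the resulting minor, $v$ has degree at most $4$, the quotient minus $v$ is still planar, and the clean $d\le 4$ lifting (Lemma~\ref{lem:extend} in the paper) applies to every step of the inductively obtained sequence, with no rerouting needed. That identification step is the idea your argument is missing.
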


Here, the number $5$ of colors cannot be replaced by $4$
\cite{mohar1985akempic}.

The proof of Theorem \ref{thm:meyniel} implies that $\mathcal{K}_k(G)$ has
diameter $O(d^{|V(G)|})$. Similarly, the proof of Theorem \ref{thm:mm} implies
that $\mathcal{K}_5(G)$ has diameter $O(5^{|V(G)|})$.  Bonamy, Bousquet, Feghali
and Johnson \cite{bonamyx} conjectured that the former can be significantly
improved.

\begin{conjecture}\label{con:meyniel}
  If $G = (V, E)$ is $d$-degenerate and $k > d$ is an integer, then
  $\mathcal{K}_k(G)$ has diameter $O(|V|^2)$.
\end{conjecture}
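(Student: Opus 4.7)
The plan is an inductive attack on $n = |V|$, using a degeneracy ordering $v_1, \ldots, v_n$ in which every $v_i$ has at most $d$ neighbors among $\{v_1, \ldots, v_{i-1}\}$. Setting $v = v_n$, I would try to reduce any recoloring task on $G$ to one on $G - v$, which is again $d$-degenerate but smaller, and then invoke the inductive hypothesis. Writing $T(n)$ for the diameter of $\mathcal{K}_k(G)$, the goal is to establish a recurrence of the form $T(n) \le T(n-1) + O(n)$, which telescopes to the desired $O(n^2)$ bound.

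Given two colorings $\alpha$ and $\beta$ of $G$, the plan proceeds in two stages. \emph{Stage~1.} Transform $\alpha$ into a coloring $\alpha'$ with $\alpha'(v) = \beta(v)$ using $O(n)$ Kempe changes; since $v$ has at most $d < k$ neighbors, at least one color is free at $v$, and the Kempe chain on $\{\alpha(v), \beta(v)\}$ containing $v$ is a natural candidate, possibly after one preparatory swap on an interfering chain. \emph{Stage~2.} Apply the induction hypothesis on $G - v$ to obtain a sequence of at most $C(n-1)^2$ Kempe changes taking $\alpha'|_{G-v}$ to $\beta|_{G-v}$, and lift this sequence back to $G$ while keeping the color of $v$ fixed throughout. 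If the lift inflates the length by at most a constant factor per step, the overall count remains $O(n^2)$.

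The main obstacle is the lifting step. A Kempe change in $G - v$ on colors $\{a, b\}$ lifts verbatim to $G$ unless $v$ is colored $a$ or $b$ \emph{and} has a neighbor lying in the chain; in that case the chain extends through $v$ in $G$, and a naive swap would alter its color and destroy the invariant. When $k \geq d + 2$ there is always a color at $v$ outside $\{a, b\}$, so a constant number of extra swaps at $v$ suffice to park it on a safe color before the step and restore it afterwards, yielding an amortised $O(n^2)$ bound. The delicate regime is the tight one, $k = d + 1$: the only color free at $v$ may itself be $a$ or $b$, so the parking trick fails. Handling this tight case --- perhaps by reordering the inductive sequence to batch problematic steps, by adapting the degeneracy ordering during the recoloring, or by designing a global potential function that is forced to decrease at a controlled rate --- is what I expect to be the true technical hurdle, and is precisely where the conjecture is believed to be genuinely difficult and where it remains open in full generality.
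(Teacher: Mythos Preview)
This statement is a \emph{conjecture} in the paper, attributed to Bonamy, Bousquet, Feghali and Johnson; the paper does not prove it and explicitly presents it as open. So there is no proof in the paper to compare against. Your proposal is candid that the tight case $k = d+1$ is where the real difficulty lies and remains unresolved, but the part you present as under control --- the regime $k \geq d+2$ --- also contains a genuine gap.

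The flaw is in the recurrence. Your lifting scheme turns each Kempe change in $G-v$ into $O(1)$ Kempe changes in $G$ (a park of $v$, the step itself, perhaps an unpark). That yields
\[
T(n)\;\leq\;c\,T(n-1)+O(n)
\]
with some constant $c\geq 2$, and this solves to $T(n)=O(c^{\,n})$, not $O(n^2)$. To obtain the recurrence $T(n)\leq T(n-1)+O(n)$ you actually state, you would need the \emph{total} lifting overhead across all $T(n-1)$ inductive steps to be $O(n)$, not a constant multiplier applied step by step; but nothing in your argument bounds how many of those steps force a park of $v$, and in the worst case all of them do. This is exactly why the classical Las~Vergnas--Meyniel induction gives only an exponential diameter, and why the best general bound for $k>d+1$ (the Bousquet--Heinrich result cited in the paper) is $O(n^{d+1})$ rather than $O(n^2)$ --- obtained via a different argument, not a one-vertex induction. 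The paper's own polynomial bound for planar graphs with five colours likewise sidesteps this trap by peeling off a \emph{linear fraction} of the vertices at each inductive step, so that the recursion depth is logarithmic rather than linear; even then the exponent it obtains is nowhere near~$2$.
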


For $k > d + 1$ in Conjecture \ref{con:meyniel}, a result of Bousquet and
Heinrich \cite{heinrich} regarding the reconfiguration graph for colorings of
graphs with bounded degeneracy implies the following

\begin{theorem}
  If $G = (V, E)$ is $d$-degenerate and $k > d + 1$ is an integer, then
  $\mathcal{K}_k(G)$ has diameter $O(|V|^{d+1})$.
\end{theorem}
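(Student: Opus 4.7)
The plan is to deduce the theorem from a diameter bound on the simpler \emph{single-vertex recoloring graph} $\mathcal{R}_k(G)$, whose vertices are the proper $k$-colorings of $G$ and whose edges join colorings that differ at exactly one vertex. Concretely, I will cite the result of Bousquet and Heinrich \cite{heinrich} that $\mathcal{R}_k(G)$ has diameter $O(|V|^{d+1})$ whenever $G$ is $d$-degenerate and $k \geq d+2$, and show that this bound transfers to $\mathcal{K}_k(G)$ for free.

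The key observation is that $\mathcal{R}_k(G)$ is a spanning subgraph of $\mathcal{K}_k(G)$. Let $\alpha, \beta$ be two proper $k$-colorings of $G$ that differ only at one vertex $v$, with $\alpha(v) = a$ and $\beta(v) = b$. Since $\beta$ is proper and agrees with $\alpha$ on $N(v)$, no neighbor of $v$ has color $b$ under $\alpha$. Consequently, the $\{a,b\}$-Kempe chain of $\alpha$ containing $v$ reduces to $\{v\}$, and the associated K-change turns $\alpha$ into $\beta$. Hence every edge of $\mathcal{R}_k(G)$ is an edge of $\mathcal{K}_k(G)$, and every walk in the former lifts to one of the same length in the latter, giving $\mathrm{diam}(\mathcal{K}_k(G)) \leq \mathrm{diam}(\mathcal{R}_k(G))$.

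Combining the two ingredients yields the claimed $O(|V|^{d+1})$ bound. The sole substantive step is the cited theorem of Bousquet and Heinrich; the reduction from single-vertex recolorings to K-changes is essentially tautological, since a singleton Kempe chain exists precisely when a vertex can be freely recolored. Consequently, no additional obstacle arises in this proposition, and the hard work has already been done in \cite{heinrich}.
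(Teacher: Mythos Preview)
Your proposal is correct and matches the paper's approach: the paper does not give its own proof but simply states that the theorem follows from the result of Bousquet and Heinrich~\cite{heinrich} on the single-vertex recoloring graph. Your explicit observation that $\mathcal{R}_k(G)$ is a spanning subgraph of $\mathcal{K}_k(G)$ (via singleton Kempe chains) is exactly the implicit reduction the paper has in mind.
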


The case $k = d+1$ in Conjecture \ref{con:meyniel} seems much more
challenging. It was only until very recently that Bonamy, Delecroix and
Legrand-Duschene \cite{bonamy2021kempe} addressed this case for some proper
subclasses of degenerate graphs such as graphs with bounded treewidth, graphs
with bounded maximum average degree and $(\Delta -1)$-degenerate graphs, where
$\Delta$ denotes the maximum degree.

The object of this paper is to break the $k > d+1$ (in fact, $k = d + 1$)
barrier for the class of planar graphs, by proving the following strengthening
of Theorem \ref{thm:mm}.

\begin{theorem}\label{theorem}
  If $G$ is a planar graph, then $\mathcal{K}_5(G)$ has diameter at most a
  polynomial in the number of vertices of $G$.
\end{theorem}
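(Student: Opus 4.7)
My plan is to prove a stronger, more operational statement by induction on $n = |V(G)|$: there exist constants $C, c > 0$ such that for every planar graph $G$ and any pair of proper $5$-colorings $\alpha, \beta$ of $G$, one can transform $\alpha$ into $\beta$ using at most $C n^{c}$ K-changes. The overall scheme is to reduce to a smaller planar graph by "processing" one vertex at a time, in such a way that each reduction step costs only polynomially many K-changes; summing $\sum_{i\le n} O(i^c)$ still yields a polynomial bound.

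For the inductive step, I would first select a vertex $v$ of degree at most $5$, which exists by Euler's formula. The aim is to perform a polynomial-length sequence of K-changes transforming $\alpha$ and $\beta$ into colorings $\alpha^\star, \beta^\star$ that agree on $v$ and whose restrictions to $G - v$ are proper $5$-colorings; invoking the inductive hypothesis on $G - v$ then gives a K-path between these restrictions, which lifts back to $\mathcal{K}_5(G)$ after taking some care to prevent Kempe chains of $G - v$ from extending through $v$ (this can be arranged by local adjustments of colors in $N(v)$ before and after the inductive call). If $\deg(v) \le 4$, at least one color is missing on $N(v)$ and $v$ can be recolored almost freely, so this case is handled by a few short K-changes.

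The delicate case is $\deg(v) = 5$ with the five neighbors of $v$ carrying five distinct colors, which is the situation Meyniel originally had to resolve via Kempe chains. Following his approach, I would consider, in cyclic order $v_1, \dots, v_5$ around $v$, the Kempe chains formed by pairs of non-adjacent (in the cyclic order) neighbors and exploit planarity to show that two such chains must be disjoint; swapping colors on one of them forces two neighbors of $v$ to share a color, thereby freeing a color that $v$ can take. The task is to show that this "freeing" procedure can be carried out using only $\mathrm{poly}(n)$ K-changes, rather than the $O(5^n)$ that a naive recursive application of Meyniel's argument would give, since each K-change inside the procedure might create new all-distinct configurations elsewhere in the graph.

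The main obstacle I anticipate is precisely this polynomial control of the degree-$5$ freeing step. Meyniel's original argument is essentially existential and does not come with a useful bound on the number of intermediate K-changes, so turning it into an efficient procedure requires a finer structural handle on how Kempe chains in a planar graph can cascade. To cope with this, I would introduce a potential function — for instance, the number of "bad" degree-$5$ vertices (with all five distinct colors on their neighborhood) weighted by some notion of distance from a target configuration — and try to show that one outer iteration strictly decreases this potential while spending only polynomially many K-changes, possibly aided by a planar-separator decomposition that localizes the Kempe manipulations to regions of size $O(\sqrt n)$ before globalizing. If such an amortized argument can be arranged, the telescoping sum over the $n$ vertex removals produces the desired polynomial diameter bound for $\mathcal{K}_5(G)$.
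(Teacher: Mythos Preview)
Your inductive scheme has the right flavor but contains a structural gap that no amount of local Kempe analysis will repair. The issue is not the degree-$5$ freeing step; it is the cost of \emph{lifting} the inductive sequence from $G-v$ back to $G$. A K-change in $G-v$ using the current color of $v$ and touching $N(v)$ is simply not a K-change in $G$: you must first recolor $v$, and this may be required repeatedly \emph{throughout} the inductive sequence, not merely ``before and after the inductive call'' as you suggest. Hence each of the $f(n-1)$ steps downstairs may cost an extra step upstairs, and the recurrence you actually obtain is multiplicative, of the shape $f(n)\le c\,f(n-1)+O(1)$ for some constant $c>1$, not the additive $f(n)\le f(n-1)+O(n^{c})$ that your telescoping sum $\sum_{i\le n} O(i^{c})$ presumes. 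This is already exponential in the benign $\deg(v)\le 4$ case, before any degree-$5$ complications enter; your proposed potential functions and separator tricks are aimed at the wrong bottleneck.

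The paper's remedy is exactly to tame this multiplicative blowup. Instead of peeling off one vertex, it extracts an independent set $I$ of size at least $n/140$, monochromatic under both $\alpha$ and $\beta$, consisting of vertices of degree at most~$6$. A \emph{collapsing} operation identifies same-colored neighbors so that each $v\in I$ acquires degree at most~$4$ in a graph $H$ with $H-I$ still planar; this is what makes the lifting cost only a bounded factor per vertex of $I$ (Lemma~\ref{lem:extend}). Because a constant fraction of the vertices is removed, the recurrence becomes $f(n)\le 4\,f\bigl((1-\tfrac{1}{140})n\bigr)+O(n^{2})$, which the master theorem solves polynomially. The additive $O(n^{2})$ term comes from an ingredient your outline lacks entirely: via the Four Colour Theorem one targets a $4$-coloring $\beta$, deletes its color class $B\supseteq I$, and then invokes a quantitative version of the Fisk--Mohar theorem (Proposition~\ref{prop:m1}) on the $3$-colorable planar graph $G-B$ to finish. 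Without both the constant-fraction deletion and this $3$-colorable endgame, the induction does not close.
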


The proof of Theorem \ref{theorem} is based on a proof method introduced in
\cite{feghali} and some ideas from \cite{eiben2020toward}, but of course has
many of its own features. We also note that the proof can be adapted for a
larger number of colors.

The paper is organized as follows. In Section \ref{pre}, we analyze a proof
from~\cite{mohar1} to get an essential estimate (Proposition~\ref{prop:m1}) that
we use in Section~\ref{sec} to prove Theorem~\ref{theorem}.

\section{The case of 3-colorable planar graphs}\label{pre}

In \cite{mohar1}, Mohar has proved the following theorem.

\begin{theorem}\label{thm:m1}
  Let $G$ be a $3$-colorable planar graph. Then $\mathcal{K}_4(G)$ is connected.
\end{theorem}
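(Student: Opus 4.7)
The plan is to proceed by induction on $|V(G)|$, using a canonical proper $3$-coloring $\gamma$ of $G$ (which exists since $G$ is $3$-colorable) as a target. The base cases are immediate, so assume $|V(G)| \geq 5$; since $G$ is planar, it contains a vertex of degree at most $5$. I would aim to show that every proper $4$-coloring of $G$ is Kempe-equivalent to $\gamma$, naturally splitting the proof into two parts: (i) every $4$-coloring is Kempe-equivalent to some proper $3$-coloring of $G$ (a coloring using only three of the four colors); and (ii) any two proper $3$-colorings of $G$ are Kempe-equivalent in $\mathcal{K}_4(G)$.

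For part (i), let $\alpha$ be a $4$-coloring with $V_4 := \alpha^{-1}(4) \neq \emptyset$. For each $u \in V_4$ and $i \in \{1,2,3\}$, examine the Kempe chain $B_{u,i}$ containing $u$ in colors $\{4,i\}$: swapping it changes $|V_4|$ by $|B_{u,i}\cap V_i| - |B_{u,i}\cap V_4|$. A planarity argument on the bichromatic subgraph spanned by colors $\{4,i\}$, for instance a double-counting on faces or an Euler-formula estimate, should show that for a suitably chosen $u$ (say, on the outer face of the subgraph induced by $V_4$) there exists an $i$ for which the swap is strictly decreasing. Iterating drives $|V_4|$ to zero, producing a proper $3$-coloring.

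For part (ii), the fourth color acts as a scratchpad: given two proper $3$-colorings, I would pick a vertex $v$ of degree at most $5$, apply the inductive hypothesis to $G - v$ to realign the colorings there, and then use the freedom provided by color $4$ around $v$ to match the two colorings locally. K-changes involving color $4$ allow moves between $3$-colorings that are impossible within $\mathcal{K}_3(G)$, which is what makes the extra color crucial.

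The main obstacle, I expect, is part~(i): guaranteeing that some K-change strictly decreases $|V_4|$. The delicate case is when all three chains $B_{u,1}, B_{u,2}, B_{u,3}$ are \emph{$4$-heavy}, meaning each contains at least as many color-$4$ vertices as color-$i$ vertices. Defeating this requires simultaneously exploiting planarity (to bound chain sizes via Euler's formula) and the $3$-colorability of $G$ (which restricts the configurations that can arise in the bichromatic subgraphs), and will likely hinge on a clever choice of both the vertex $u$ and the target color $i$; this is also the step from which one will have to extract the quantitative estimate mentioned in the introduction as Proposition~\ref{prop:m1}.
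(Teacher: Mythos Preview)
Your plan diverges substantially from the route taken in the paper (which follows Mohar and Fisk), and the step you yourself flag as ``the main obstacle'' is a genuine gap that your sketch does not close.

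For part (i), the potential function $|V_4|$ is not known to be monotonically reducible by single Kempe swaps, and the vague appeal to ``a planarity argument \ldots\ or an Euler-formula estimate'' does not supply one. (Incidentally, your bookkeeping is inverted: if a chain is $4$-heavy in your sense then the swap \emph{decreases} $|V_4|$; the obstruction is rather when every chain satisfies $|B_{u,i}\cap V_i|\geq |B_{u,i}\cap V_4|$.) There is no evident reason a strictly decreasing swap must exist: the subgraph on colors $\{4,i\}$ is an arbitrary bipartite planar graph, and nothing prevents each of its components from having at least as many $i$-vertices as $4$-vertices, simultaneously for all three choices of $i$; simple averaging only yields $|V_4|\le |V_i|$ for each $i$, which is no contradiction. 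This is precisely why the actual proof does \emph{not} attempt to shrink a color class. Instead it first reduces, via Proposition~\ref{lem:34}, to a $3$-colorable plane \emph{triangulation}, where the $3$-coloring is unique up to permutation. There Fisk's argument (Lemma~\ref{lem:fisk1}) tracks the set of \emph{non-singular} edges under the current $4$-coloring, shows that any monochromatic family of such edges contains a cycle bounding a disc, and swaps the two absent colors inside that disc; this strictly decreases the number of non-singular edges and terminates at the $3$-coloring. Neither the invariant nor the move resembles your part~(i).

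Your part (ii) also carries an unresolved issue. In the triangulation route there is nothing to prove (the $3$-coloring is unique), but in your setup you must lift a sequence of K-changes from $G-v$ to $G$ with $d(v)\le 5$ and only $k=4$ colors. The standard Las~Vergnas--Meyniel lifting needs $k>d(v)$, and ``color $4$ as a scratchpad'' does not obviously suffice, since intermediate colorings along the inductive sequence in $G-v$ may place color $4$ on neighbors of $v$.
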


In this section we show, by making a few simple observations, that the proof of
Theorem \ref{thm:m1} in fact establishes the following stronger fact that is
crucial to our main theorem.

\begin{proposition}\label{prop:m1}
  Let $G$ be a $3$-colorable planar graph on $n$ vertices. Then for every
  $4$-colorings $\alpha$ and $\beta$ of $G$, there exists a sequence of
  K-changes from $\alpha$ to $\beta$ that changes the color of each vertex at
  most $O(n^2)$ times.
\end{proposition}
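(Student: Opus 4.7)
The plan is to revisit Mohar's proof of Theorem~\ref{thm:m1} and verify that, with a small amount of additional bookkeeping, it in fact produces a K-change sequence of total length $O(n^2)$. Since a single K-change alters the color of any given vertex at most once (a vertex either lies in the swapped Kempe chain or it does not), a sequence of length $O(n^2)$ immediately gives the per-vertex bound stated in the proposition.

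By a symmetrization step it suffices to fix one canonical 3-coloring $\gamma$ of $G$ using only the colors $\{1,2,3\}$ and to show that any 4-coloring $\alpha$ can be transformed into $\gamma$ by an $O(n^2)$-length K-change sequence: concatenating a sequence $\alpha \to \gamma$ with the reverse of a sequence $\beta \to \gamma$ then yields the desired sequence between any pair of 4-colorings.

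The reduction $\alpha \to \gamma$ in Mohar's proof is iterative: it repeatedly decreases a monovariant on 4-colorings --- most naturally the number of vertices using color $4$, the unique color absent from $\gamma$, possibly refined lexicographically. This monovariant takes at most $O(n)$ distinct values, so there are at most $O(n)$ iterations. At each iteration, one selects a vertex $v$ with color~$4$ whose neighborhood already uses all of $\{1,2,3\}$ (otherwise $v$ can be directly recolored by a singleton K-change), and one invokes Mohar's planarity argument --- essentially a Jordan-curve-type analysis of the subgraphs induced by pairs of colors --- to produce a Kempe chain whose swap strictly decreases the monovariant.

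The main obstacle, and the point at which careful bookkeeping is needed, is to verify that each iteration can be realised using only $O(n)$ K-changes, since Mohar's construction may require a short cascade of preparatory swaps to disentangle colors near $v$ before the monovariant-decreasing swap is performed. Each such preparatory swap takes place on a connected subgraph of $G$ (of size at most $n$), and planarity restricts how the Kempe chains involved can interact, so a direct inspection of Mohar's argument should confirm the per-iteration bound. Combined with the $O(n)$ bound on the number of iterations, this gives a K-change sequence of total length $O(n^2)$, and hence the $O(n^2)$ per-vertex bound claimed by the proposition.
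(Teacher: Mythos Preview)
Your proposal misidentifies the structure of Mohar's proof, and the argument you sketch in its place is not known to work. Mohar does \emph{not} proceed by selecting a vertex of color~$4$ and using a local Jordan-curve argument to strip that color away; the monovariant ``number of vertices coloured~$4$'' is not what is being decreased. Instead, the proof first reduces to the case of a $3$-colorable plane \emph{triangulation} by inserting, into each face of $G$, a near-triangulation (Proposition~\ref{lem:34}); this costs $O(n)$ K-changes and produces a triangulation $T$ on $O(n)$ vertices, with two extended $4$-colourings. Then one applies Fisk's theorem: in a $3$-colorable triangulation, every colour class of \emph{non-singular} edges contains a cycle bounding a disc (Lemma~\ref{lem:fisk1}), and swapping the two complementary colours inside that disc strictly decreases the number of non-singular edges. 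The monovariant is thus the non-singular edge count, which is $O(n)$, and each step costs at most $n$ K-changes (one per Kempe component inside the disc), giving the $O(n^2)$ bound of Lemma~\ref{lem:fisk}.

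Your vertex-by-vertex scheme runs into the classical Kempe-tangle obstruction: for a vertex $v$ of colour~$4$ whose neighbourhood already sees $\{1,2,3\}$, a swap on a bichromatic chain through $N(v)$ can free a colour at $v$ only at the cost of creating new colour-$4$ vertices elsewhere, so the count need not decrease. You do not supply an argument to rule this out, and deferring to ``a direct inspection of Mohar's argument'' does not help since Mohar's argument has a different shape. The two missing ingredients are precisely the triangulation step and Fisk's non-singular-edge lemma; without them the proposal is a plan rather than a proof.
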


In particular, note that Proposition \ref{prop:m1} implies that
$\mathcal{K}_4(G)$ has diameter $O(n^3)$ when $G$ is a 3-colorable planar graph
on $n$ vertices. The rest of this section is devoted to the proof of
Proposition~\ref{prop:m1}. We basically follow the same steps as the proof of
Theorem~\ref{thm:m1}, except that we add some complexity estimates to:
\begin{itemize}
\item a result of Fisk~\cite{fisk} that handles the case of 3-colorable planar
  triangulations, and
\item a reduction to the (preceding) case as done by Mohar \cite{mohar1}.
\end{itemize}

\subsubsection*{Analysing the result of Fisk.}

We consider the theorem of Fisk~\cite{fisk} stated below.

\begin{theorem}\label{thm:fisk}
  Let $G$ be a $3$-colorable triangulation of the plane. Then $\mathcal{K}_4(G)$
  is connected.
\end{theorem}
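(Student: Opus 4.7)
My plan is to establish two things: (i) every proper $4$-coloring of $G$ is Kempe-equivalent to some proper $3$-coloring, and (ii) all proper $3$-colorings of $G$ lie in the same Kempe class. Together these give connectivity of $\mathcal{K}_4(G)$. I start by recalling the classical fact that a planar triangulation is $3$-colorable if and only if every vertex has even degree (Heawood), together with the rigidity observation that any proper $3$-coloring of such a triangulation is determined up to a global permutation of the colors by its value on a single triangle, since each triangular face must use all three colors.

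\textbf{Step 1: all proper $3$-colorings are Kempe-equivalent.} By the rigidity observation, it suffices to realize every element of $S_3$ on the color classes via Kempe changes from a fixed reference $3$-coloring. For a transposition of two colors $\{a,b\}$, the bichromatic subgraph on colors $\{a,b\}$ is spanning in~$G$ minus the third color class, and swapping on its connected components independently followed by appropriate local corrections achieves the transposition. Since transpositions generate $S_3$, a short composition of such moves takes any $3$-coloring to any other.

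\textbf{Step 2: reducing from $4$ to $3$ colors.} I induct on the size $|S_4|$ of the set of vertices colored~$4$. If $S_4$ is empty we are done; otherwise pick $v \in S_4$. Because $G$ is a triangulation, the neighborhood $N(v)$ forms a cycle in $G$, properly colored from $\{1,2,3\}$, and its length $\deg(v)$ is even. If some color $c \in \{1,2,3\}$ is missing from $N(v)$, then $\{v\}$ itself is a Kempe chain in colors $\{4,c\}$, and a single swap reduces $|S_4|$ by one. If instead $N(v)$ uses all three colors, I would consider the three Kempe chains through $v$ in colors $\{4,1\}$, $\{4,2\}$, $\{4,3\}$; using the even length of the neighborhood cycle together with the planar embedding, I expect to argue (in Fisk's style, via a parity/winding count around $v$) that at least one of these chains can be recolored so as to decrease $|S_4|$, possibly after one or two auxiliary Kempe swaps to break alternations along the cycle.

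\textbf{Main obstacle.} The core difficulty is Step~2 in the case where $N(v)$ contains all of $\{1,2,3\}$: the naive swap of a Kempe chain through $v$ trades one vertex of color~$4$ for potentially many others, so the argument must carefully leverage the even parity of $\deg(v)$ and the $3$-colorability (even-degree condition extending to the rest of $G$) to guarantee that the right chain exists. This parity/planarity interplay, reminiscent of Heawood's proof, is the technical heart of Fisk's theorem and is where the bulk of the work will lie.
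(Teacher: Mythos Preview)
Your Step~2 contains a genuine gap that you yourself flag but do not close: in the case where all of $\{1,2,3\}$ appear on the cycle $N(v)$, you only ``expect'' that some Kempe swap decreases $|S_4|$, appealing vaguely to parity. The potential function $|S_4|$ is in fact the wrong measure of progress here. Swapping the $\{4,c\}$-chain through $v$ recolours to $4$ every vertex of colour $c$ on that chain, and nothing in the even-degree hypothesis forces one of the three chains to contain at most one such vertex. This is exactly the obstruction that wrecks Kempe's attempted proof of the Four Colour Theorem: local Kempe manipulations at a single vertex do not control a global vertex count, and the ``one or two auxiliary swaps'' you mention run into the same tangling problem. (Your Step~1 is fine, though over-stated: a global transposition of colours $a,b$ is achieved simply by swapping every $\{a,b\}$-chain, with no ``local corrections'' needed.)

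Fisk's argument, which the paper follows in Lemmas~\ref{lem:fisk} and~\ref{lem:fisk1}, uses a completely different, global invariant. For a $4$-colouring $f$ of the triangulation, call an edge $xy$ \emph{singular} if the two triangles on $xy$ have third vertices of the same colour; in the (essentially unique) $3$-colouring every edge is singular. The key structural fact (Lemma~\ref{lem:fisk1}) is that the non-singular edges of any fixed edge-colour $f(e)=\{a,b\}$ contain a cycle bounding a disc $D$. Swapping the two \emph{other} colours $\{1,2,3,4\}\setminus\{a,b\}$ on the interior of $D$ turns every boundary edge of $D$ from non-singular to singular while leaving singular edges singular, so the number of non-singular edges strictly drops. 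Iterating reaches the $3$-colouring. The decisive idea you are missing is thus to track non-singular edges rather than vertices of colour $4$, and to perform a regional swap in the complementary colour pair rather than a chain swap through a single vertex.
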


We show that the proof of Theorem~\ref{thm:fisk}, word for word, gives us the
following estimate.

\begin{lemma}\label{lem:fisk}
  Let $G$ be a $3$-colorable triangulation of the plane on $n$ vertices. Then
  for every $4$-colorings $\alpha$ and $\beta$ of $G$, there exists a sequence
  of K-changes from $\alpha$ to $\beta$ that changes the color of each vertex at
  most $O(n^2)$ times.
\end{lemma}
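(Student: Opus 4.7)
The plan is to follow Fisk's original proof of Theorem~\ref{thm:fisk} step by step, inserting at each stage the relevant complexity estimate. Since any single Kempe change alters the color of any given vertex at most once, a bound of $O(n^2)$ on the total number of Kempe changes in Fisk's procedure will immediately yield the per-vertex bound asserted in the lemma (the stronger form we need for later sections).

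The first ingredient I would use is the well-known structural fact that every $3$-colorable plane triangulation admits a proper $3$-coloring which is unique up to a permutation of the three colors, since in a triangulation the three color classes are forced to propagate from any initial facial triangle. Fix such a coloring $\gamma$. It then suffices to show that an arbitrary $4$-coloring can be transformed by Kempe changes to $\gamma$ (up to permuting color names) in such a way that each vertex changes color only $O(n^2)$ times; chaining this bound for $\alpha$ with its analogue for $\beta$ through the intermediate $\gamma$ proves the lemma, at the cost of doubling the constant.

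Fisk's procedure performs the transformation to $\gamma$ by iteratively eliminating vertices receiving the fourth color. Each such ``defect'' vertex is either swapped directly to one of the three colors of $\gamma$ by a single Kempe change on a suitably chosen bichromatic chain, or is paired---via the global parity invariant intrinsic to Fisk's argument---with another defect and moved towards it along a path in $G$ until the two annihilate. I would re-examine each of these sub-routines in turn and verify two quantitative properties: first, that each individual step uses only $O(1)$ Kempe changes; second, that the total number of such steps needed to handle a single defect (or a single paired annihilation) is at most $O(n)$, as the relevant motions traverse paths in $G$ of length at most $n$. Summed over the at most $n$ defects, this yields $O(n^2)$ total Kempe changes, and hence the claimed $O(n^2)$ per-vertex bound.

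The main obstacle will be verifying the second property uniformly along the procedure, ruling out the possibility that eliminating one defect creates a linear number of fresh defects whose handling cascades into an $n^3$ or worse bound. Concretely, I expect to need to identify within Fisk's construction an explicit monovariant---for example the number of fourth-color vertices, or a finer quantity such as the summed pairwise distance between paired defects---that is guaranteed to strictly decrease after every $O(1)$ Kempe changes. Once such a monovariant is pinned down, the $O(n^2)$ estimate follows by direct summation, and no further ideas beyond those already present in Fisk's proof are required.
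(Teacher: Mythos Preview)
Your plan misidentifies the mechanics of Fisk's argument. Fisk does not move individual ``defect'' vertices of color $4$ along paths and annihilate them in pairs; the objects that decrease are \emph{non-singular edges}. Recall that in a plane triangulation an edge $xy$ is singular under a coloring $f$ if the two triangles $xyz$ and $xyw$ containing it satisfy $f(z)=f(w)$. The unique $3$-coloring has every edge singular, and the key structural lemma (Lemma~\ref{lem:fisk1}) says that in a $3$-colorable triangulation, the non-singular edges of any fixed color pair contain a cycle bounding a region $D$. Swapping the two \emph{other} colors inside $D$ turns all boundary edges singular and keeps interior singular edges singular, so the number of non-singular edges strictly drops.

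This immediately gives the monovariant you were looking for: it is the number of non-singular edges, initially at most $|E(G)|=O(n)$, and it decreases after each region-swap. A single region-swap is at most $n$ K-changes (one per bichromatic component inside $D$), so the whole procedure uses at most $n\cdot|E(G)|=O(n^2)$ K-changes, which yields the per-vertex bound as you note. By contrast, your proposed monovariants---the number of colour-$4$ vertices, or a sum of pairing distances---do not correspond to what Fisk's steps actually control, and there is no evident reason either would be monotone under the region-swaps; so as written your outline does not close.
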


Let $f$ be a $4$-coloring of a triangulation $G$ of the plane, and let $e =xy$
be an edge of $G$. We denote by $f(e)= \{f(x),f(y)\}$ the color of $e$ under
$f$. If $xyz$ and $xyw$ are the two triangles containing an edge $xy$, we say
that $xy$ is \emph{singular under $f$} if $f(w) = f(z)$.

To prove Lemma \ref{lem:fisk}, we require the following key structural lemma
extracted from the proof of Theorem \ref{thm:fisk}. Note that, for a
triangulation $G$ of the plane, if $G$ has a $3$-coloring, then this coloring is
unique up to permutations of colors, and all edges are singular.

\begin{lemma}\label{lem:fisk1}
  Let $G$ be a $3$-colorable triangulation of the plane, and let $f$ be a
  $4$-coloring of $G$. Then every monochromatic set of non-singular edges of $G$
  contains a cycle that bounds some region of the plane.
\end{lemma}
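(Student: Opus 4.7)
The plan is to show that for any pair of colors $\{c_1,c_2\}$, the set $M$ of non-singular $\{c_1,c_2\}$-edges of $G$ is an \emph{even subgraph}, meaning every vertex of $G$ has an even number of incident edges in $M$. Once this is established, $M$ decomposes into edge-disjoint simple cycles, and each such cycle, viewed as a Jordan curve in the planar embedding of $G$, bounds a region of the plane; so any non-empty ``monochromatic'' class of non-singular edges contains a region-bounding cycle, which is the conclusion sought.

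The 3-colorability hypothesis enters through the classical observation that every vertex of a 3-colorable plane triangulation has even degree: fixing the essentially unique proper 3-coloring $g$ and a vertex $v$ with $g(v)=a$, the link of $v$ is a cycle whose vertices are properly 2-colored in the remaining two colors, hence of even length.

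The main local step is a parity count at a vertex $v$ with $f(v)=c_1$ (the case $f(v)=c_2$ is symmetric, and vertices of $f$-color $c_3$ or $c_4$ are incident to no $M$-edge, so contribute $0$). Walking around the link of $v$, the $f$-colors form a cyclic word over $\{c_2,c_3,c_4\}$ with distinct consecutive entries, and an $M$-edge at $v$ corresponds exactly to a $c_2$-position on the link whose two cyclic neighbors are $c_3$ and $c_4$, one of each. Splitting the cyclic word at each $c_2$ into maximal non-$c_2$ runs (each one an alternating $\{c_3,c_4\}$-word), the cyclic $\{c_3,c_4\}$-word obtained by deleting all $c_2$'s must have an even number of adjacent color changes. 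A brief accounting --- separating that count into changes occurring within a run and changes occurring across a deleted $c_2$ --- yields the identity $N(v)\equiv \deg(v)\pmod 2$, where $N(v)$ denotes the number of $M$-edges at $v$. Combined with the evenness of $\deg(v)$, this gives $N(v)$ even.

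I expect the bookkeeping in this parity step to be the only non-routine ingredient. Everything else --- extracting a cycle from a non-empty even subgraph, and observing that a simple cycle in a plane graph bounds a region --- is standard. The crucial and sharp role of the 3-colorability hypothesis is also worth flagging: without it, vertex degrees in a plane triangulation can be odd and the parity identity alone would not force $N(v)$ to be even.
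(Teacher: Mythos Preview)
Your argument is correct, and in fact reconstructs Fisk's original proof: the paper itself does not prove this lemma but merely states it as ``extracted from the proof'' of Fisk's theorem and uses it as a black box. The parity identity $N(v)\equiv\deg(v)\pmod 2$, obtained by counting $\{c_3,c_4\}$-transitions in the link word after deleting the $c_2$'s, together with the even-degree property of $3$-colorable plane triangulations, is exactly the mechanism Fisk uses.
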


Using this lemma, we can conclude the proof of Lemma~\ref{lem:fisk}.
\begin{proof}[Proof of Lemma \ref{lem:fisk}]
  We shall show, by exhibiting at most $n \cdot |E(G)|$ K-changes, that any
  $4$-coloring of $G$ is K-equivalent to the (unique) $3$-coloring of $G$, which
  will prove the lemma.  To do so, given a $4$-coloring $f$ of $G$, we show how
  to obtain, via at most $n$ K-changes, a new coloring $g$ that is
  $K$-equivalent to $f$ and with fewer non-singular edges. As no edge in the
  $3$-coloring of $G$ is non-singular, by iterating this argument at most
  $|E(G)|$ times the result will follow.

  Let $e$ be a non-singular edge of $G$. By Lemma \ref{lem:fisk1}, there is a
  cycle in $G$ whose edges have the same color as $e$ and bounding some region
  $D$ of the plane.  By interchanging the two colors in $\{1,2, 3, 4 \}
  \setminus f(e)$ in the interior of $D$, we obtain a new coloring $g$ with
  fewer non-singular edges than $f$ (singular edges in the interior of $D$ stay
  singular, while the edges of the cycle on the boundary of $D$ change from
  non-singular to singular).
\end{proof}

\subsubsection*{Reduction to the triangulation case.}

We first restate Proposition 4.3 from Mohar~\cite{mohar1} except that we add
some observations about the number of vertices of the resulting triangulation
and the number of K-changes involved -- these directly follow from Mohar's
proof.

\begin{proposition}\label{lem:34}
  Let $G$ be a planar graph with a facial cycle $C$ and two 4-colorings $c_1$,
  $c_2$.  Then there exists a graph $H$ formed from $G$ by adding a
  near-triangulation of size $O(|C|)$ inside $C$ and two 4-colorings $c'_1,c'_2$
  of $H$ such that $c'_1|_{V(G)}$ and $c'_2|_{V(G)}$ are obtained from $c_1,c_2$
  using at most $O(1)$ K-changes.  Moreover, if the restriction of $c_1$ to $C$
  is a $3$-coloring, then $c'_1$ is a 3-coloring of $H$ that coincides with
  $c_1$ on $V(G)$.
\end{proposition}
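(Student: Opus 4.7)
My plan is to follow Mohar's proof of Proposition~4.3 in~\cite{mohar1} while keeping track of two quantitative refinements: the number of vertices added inside the face bounded by $C$, and the number of Kempe changes performed on $c_1$ and $c_2$. The argument naturally splits into two stages, and the $O(|C|)$ and $O(1)$ bounds surface directly from Mohar's construction once one reads it with these quantities in mind.

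In the first stage, I reduce $c_1$ and $c_2$ to colorings whose restrictions to $C$ use only three of the four available colors. For each $c_i$, if some color (say $4$) still appears on $C$, I identify a Kempe chain in some pair $\{4,j\}$ whose swap eliminates several (ideally all) occurrences of color~$4$ on $C$ at once: since $C$ is a facial cycle, the interaction between Kempe chains in $G$ and the vertices of $C$ is sufficiently constrained that a constant number of such swaps reduces the palette on $C$ to three colors. In the special case where $c_1|_C$ is already a 3-coloring, this stage is skipped entirely for $c_1$, which is exactly the mechanism behind the \emph{moreover} clause.

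In the second stage, I fill the face bounded by $C$ with a near-triangulation on $O(|C|)$ new vertices chosen so that both reduced boundary colorings extend to proper 4-colorings of $H$. A convenient construction is to place an inner layer of vertices along $C$ and triangulate greedily face by face, yielding a triangulation of linear size with enough flexibility in its 4-colorings to realize any prescribed boundary coloring on three colors. The bound $O(|C|)$ is immediate from the linear size of this construction. When the first stage has been skipped because $c_1|_C$ is a 3-coloring, the same triangulation admits a proper 3-coloring extending $c_1|_C$ to all of $H$, yielding the 3-coloring $c_1'$ required by the \emph{moreover} clause.

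The main obstacle is the first stage: showing that the reduction to three colors on $C$ can be achieved using an \emph{absolute} constant number of Kempe changes, independent of $|C|$ and $|V(G)|$. This requires a careful analysis of how Kempe chains in $G$ meet the facial cycle $C$ and how to pick a swap that simultaneously eliminates an entire color class on $C$, which is precisely the technical heart of Mohar's original argument. Once that is in place, the second stage is more flexible and essentially amounts to a controlled extension of a boundary coloring across a disk, which is standard.
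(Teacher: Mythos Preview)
The paper does not give its own proof of this proposition; it explicitly states that this is a restatement of Proposition~4.3 from Mohar~\cite{mohar1}, with the size bound $O(|C|)$ and the $O(1)$ K-change bound added as observations that ``directly follow from Mohar's proof.'' Your plan---to walk through Mohar's construction and read off these two quantitative bounds---is precisely what the paper does, so the approaches coincide.
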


We may now prove Proposition~\ref{prop:m1}.
\begin{proof}[Proof of Proposition \ref{prop:m1}]
  The proof follows the same steps as Theorem~4.4 in \cite{mohar1}. We apply
  Proposition~\ref{lem:34} to each face of $G$ (instead of Proposition~4.3 from
  \cite{mohar1}). We thus made $O(n)$ K-changes, and the resulting triangulation
  $T$ has $O(n)$ vertices. We then apply Lemma~\ref{lem:fisk} (instead of
  Theorem~4.1 from \cite{mohar1}) to obtain a sequence of K-changes between the
  two colorings of $T$ that changes the color of each vertex at most $O(n^2)$
  times.
\end{proof}

\section{Main Theorem}\label{sec}

In this section we prove Theorem \ref{theorem}. Thanks to the celebrated Four Colour Theorem, it suffices to prove the following result. 

\begin{theorem}\label{thm:2}
  Let $G$ be a plane graph with $n$ vertices. For every $5$-coloring $\alpha$ of
  $G$ and every $4$-coloring $\beta$ of $G$, there is a sequence of K-changes
  from $\alpha$ to $\beta$ where each vertex is recolored polynomially many
  times.
\end{theorem}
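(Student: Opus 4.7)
The plan is to reduce Theorem~\ref{thm:2} to Proposition~\ref{prop:m1} by first aligning $\alpha$ with $\beta$ on an appropriate color class, and then invoking the proposition on a 3-colorable subgraph. Let $I := \beta^{-1}(4)$. Since $\beta$ is proper, $I$ is an independent set of $G$, and $G' := G - I$ is 3-colorable, witnessed by $\beta|_{G'}$ which uses only colors in $\{1, 2, 3\}$. The proof proceeds in two phases. In the first phase, I transform $\alpha$ via K-changes into a coloring $\alpha'$ of $G$ satisfying $\alpha'^{-1}(4) = I$ (so $\alpha'$ uses color $4$ on exactly the same vertices as $\beta$), recoloring each vertex polynomially many times. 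In the second phase, I reconfigure $\alpha'$ to $\beta$ without ever reusing color $4$.

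The second phase will be essentially immediate from Proposition~\ref{prop:m1}. Indeed, $\alpha'|_{G'}$ and $\beta|_{G'}$ are then 4-colorings of the 3-colorable planar graph $G'$ using colors in the set $\{1, 2, 3, 5\}$, so the proposition provides a sequence of K-changes reconfiguring the former into the latter with $O(n^2)$ recolorings per vertex. Every such K-change swaps two colors drawn from $\{1, 2, 3, 5\}$, and since every vertex of $I$ carries color $4$ in $\alpha'$, no Kempe chain on these two colors can enter $I$. Hence each Kempe chain in $G'$ coincides with the corresponding Kempe chain in $G$, and the whole sequence lifts verbatim to K-changes of $G$ that fix $I$ pointwise.

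The main obstacle is the first phase: aligning the color-$4$ class of $\alpha$ with that of $\beta$. My plan is to proceed by induction on a suitable potential, handling one vertex at a time. The inductive step must either bring a vertex $v \in I$ with $\alpha(v) \neq 4$ to color $4$, or move a vertex $w \notin I$ with $\alpha(w) = 4$ to another color, through a carefully chosen K-change. The naive choice---swapping the $\{\alpha(v), 4\}$-Kempe chain of $v$---can propagate through other vertices and undo earlier progress, so this must be refined. Following the method of \cite{feghali}, I would exploit the 5-degeneracy of planar graphs and use color $5$ as a buffer to isolate the vertex we wish to move from the rest of its Kempe chain, in the same spirit as the classical proof of the 5-color theorem. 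Ideas from \cite{eiben2020toward}, typically based on a recursive decomposition of the plane along a short separating cycle, should provide the amortization needed to keep the total number of recolorings per vertex polynomial. This amortization step is the technical core of the proof and the part I expect to be the most delicate.
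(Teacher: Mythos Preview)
Your Phase~2 is correct and is exactly the last step of the paper's argument: once a $5$-coloring $\alpha'$ agrees with $\beta$ on the class $I=\beta^{-1}(4)$, the restriction to $G-I$ is a pair of $4$-colorings of a $3$-colorable planar graph, and Proposition~\ref{prop:m1} finishes with $O(n^2)$ recolorings per vertex. The observation that Kempe chains on colors from $\{1,2,3,5\}$ cannot enter $I$ is also right.

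The gap is Phase~1. As written, it is not a proof but a wish list: you name a potential, observe that naive Kempe changes fail, and then gesture at $5$-degeneracy, a ``buffer colour'', and separator-based recursion from~\cite{eiben2020toward}. None of these, alone or combined in the way you suggest, gives a polynomial bound. In particular, processing one vertex of $I$ at a time with a buffer colour is essentially the $5$-colour-theorem trick, and it is well known that iterating it can cascade: fixing $v$ may un-fix previously treated vertices, and there is no obvious monovariant. The separator idea from~\cite{eiben2020toward} is also not what the paper uses.

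What the paper actually does for Phase~1 is quite different from your plan, and the difference matters. It does \emph{not} try to align the full class $\beta^{-1}(4)$. Instead it chooses a set $J$ that is simultaneously (i) monochromatic in $\alpha$, (ii) monochromatic in $\beta$, (iii) consisting only of vertices of degree at most~$6$, and (iv) of size at least $n/140$. Conditions (i)--(iii) allow a local \emph{collapsing} operation: after at most three K-changes per vertex one can identify like-coloured neighbours of each $v\in J$ so that $v$ has degree at most~$4$ in the resulting graph and deleting $J$ leaves a planar graph $F$ on at most $n-|J|$ vertices. One then applies the induction hypothesis to $F$ to reach a $4$-coloring of $F$ avoiding $\alpha(J)$, and lifts the sequence back to $G$ (each $v\in J$ is recolored at most four times per recoloring of a neighbour, since it has degree~$4$ after collapsing). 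Condition (iv) makes the recursion shrink $n$ by a constant factor, so the resulting recurrence $f(n)\le 4f(n-n/140)+O(n^2)$ solves to a polynomial.

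Your choice $I=\beta^{-1}(4)$ has none of the structural features (iii)--(iv): its vertices can have arbitrarily large degree, so no collapsing is available, and there is no induction on a planar minor. If you want to salvage your outline, the missing idea is precisely to abandon the full colour class in favour of a small, low-degree, doubly-monochromatic set and to use identification of like-coloured neighbours to drop to a smaller planar instance.
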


In the remainder of this section, we prove Theorem \ref{thm:2}. Let us briefly
sketch the details of the approach. The proof proceeds by induction on the
number of vertices. Our aim is to describe a sequence of K-changes from $\alpha$
to $\beta$ such that each vertex is recolored at most $f(n)$ times, where $f$
will satisfy a recurrence relation given at the end of the section. To establish
this, we roughly adopt the following strategy:
\begin{enumerate}
\item We find a `large' independent set $I$ that is monochromatic in both
  $\alpha$ and $\beta$ and that contains vertices of degree at most 6 in $G$
  (that $I$ is `large' will enable us to show that $f$ is a polynomial
  function).
\item We introduce an operation at a vertex that we call \emph{collapsing},
  which when applied to each vertex of $I$ gives a new graph $H$ where the
  degree of each vertex of $I$ is at most $4$ in $H$ and such that $F = H - I$
  is planar. We use these to show that any sequence of K-changes in $F$ extends
  to a sequence of K-changes in $H$ and, in turn, in $G$.
\item 
  We apply induction to find a sequence of K-changes in $F$ from any
  $5$-coloring of $F$ to a $4$-coloring of $F$ avoiding the color
  $\alpha(I)$. Applying Step 2, this sequence extends to a sequence in $G$
  ending at a $5$-coloring, where color $5$ may appear only on $I$.
\item By definition, $I \subset B$ for some color class $B$ of $\beta$; so we
  can recolor each vertex of $B$ to color $5$. Finally, noting that $G - B$ is a
  3-colorable planar graph, we then apply Proposition~\ref{prop:m1} to recolor
  the remaining vertices in $G - B$ to their color in $\beta$.
\end{enumerate}

In the rest of this section, we give the details and conclude with a small
analysis of the maximum number of times a vertex changes its color.

\subsubsection*{Step 1: Constructing $I$.}

We prove that the required independent set $I$ exists.

\begin{lemma}
  \label{lem:I}
  There exists an independent set $I$ of $G$ such that:
  \begin{itemize}
  \item all the vertices of $I$ have degree at most 6,
  \item $I$ is contained in a color class of $\alpha$ and of $\beta$,
  \item $|I|\geqslant \frac{n}{140}$.
  \end{itemize}
\end{lemma}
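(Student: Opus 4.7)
The proof plan proceeds in two short steps, combining planarity with a pigeonhole argument over the pairs of color classes of $\alpha$ and $\beta$.

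First, I will extract a large set of low-degree vertices. Since $G$ is planar, we have $|E(G)| \leq 3n - 6$, so $\sum_{v \in V(G)} \deg(v) = 2|E(G)| < 6n$. Let $L \subseteq V(G)$ denote the set of vertices of degree at most $6$ and let $k = |V(G) \setminus L|$ be the number of vertices of degree at least $7$. Then $7k \leq \sum_{v \notin L} \deg(v) \leq 6n$, hence $k \leq 6n/7$, which gives $|L| \geq n/7$.

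Second, I will use pigeonhole on the colorings. Each vertex $v \in L$ carries a pair $(\alpha(v), \beta(v)) \in \{1,\dots,5\} \times \{1,\dots,4\}$, and there are exactly $20$ such pairs. Therefore there exist colors $a \in \{1,\dots,5\}$ and $b \in \{1,\dots,4\}$ such that the set
\[
I \;=\; \{v \in L : \alpha(v) = a \text{ and } \beta(v) = b\}
\]
has size at least $|L|/20 \geq n/140$. Every vertex of $I$ has degree at most $6$ by construction, $I$ is contained in the $\alpha$-color class $a$ and in the $\beta$-color class $b$, and in particular $I$ is independent since each color class of the proper coloring $\alpha$ is independent. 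This establishes all three required properties.

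I do not expect any genuine obstacle here: the only subtle point is to make sure the planar bound $|E| \leq 3n-6$ is applied correctly (and that the edge case where $n$ is too small to give $|I| \geq 1$ is either trivial or handled implicitly by the induction base), after which the argument is just double counting plus pigeonhole.
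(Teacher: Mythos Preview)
Your proof is correct and follows essentially the same approach as the paper: first use Euler's formula to show that the set of vertices of degree at most $6$ has size at least $n/7$, then apply pigeonhole over the $5\times 4$ pairs of color classes of $\alpha$ and $\beta$ to extract the desired set $I$. The only cosmetic difference is that the paper phrases the degree bound as a contradiction while you argue directly, but the content is identical.
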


\begin{proof}
  Let $S$ be the set of vertices of degree at most $6$ in $G$. Then $|S| > n/7$
  since otherwise
  \[
  \sum_{v \in V(G)}d(v) \geq \sum_{v \in V(G) - S}d(v) \geq 7(n - \frac{n}{7}) =
  6n, \] which contradicts Euler's formula.

  For $i \in \{1, \dots, 5\}$ and $j \in \{1, \dots, 4\}$ define the set
  \begin{gather*}
    S_{i,j} = S \cap \alpha^{-1}(i) \cap \gamma^{-1}(j).
  \end{gather*}
  Note that each $S_{i,j}$ satisfies all the criteria from the lemma, except
  maybe the last.  However, by the pigeonhole principle, there exists $i$ and
  $j$ such that $S_{i, j}$ contains at least $|S| / (5 \times 4) \geq n/140$
  vertices, which concludes the proof.
\end{proof}

From now on, we fix a set $I$ satisfying the hypotheses of Lemma~\ref{lem:I}.

\subsubsection*{Step 2: Constructing $H$ and extending recoloring sequences.}

In order to construct $H$, we want to identify vertices in $N(I)$ that are
colored alike so that vertices of $I$ end up with degree 4 and the resulting
graph with $I$ excluded is planar. We show that we can modify the coloring
$\alpha$ so that such identification becomes possible.

Let $N$ be a plane graph. For a $5$-coloring $\varphi$ of $N$ and a vertex $v$
of $K$ with $d(v) = 6$, we say that $v$ is \emph{$\varphi$-good} if, in
$\varphi$, the vertex $v$ has three neighbors colored alike or two pairs $(a,b)$
and $(c,d)$ of neighbors colored alike that are \emph{non-overlapping},
i.e. such that $N-v+ab+cd$ is planar. A sequence of K-changes is said to
\emph{avoid} color $a$ if no vertex involved in some K-change in the sequence
changes its color to $a$.

\begin{lemma}\label{lem:swaps}
  Let $N$ be a plane graph, $\alpha$ a 5-coloring of $K$, and $v\in V(N)$ such
  that $d(v) = 6$. There exists a sequence of at most three K-changes avoiding
  $\alpha(v)$ that transforms $\alpha$ to a $5$-coloring $\beta$ of $N$ such
  that $v$ is $\beta$-good.
\end{lemma}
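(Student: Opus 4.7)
\textbf{Proof plan for Lemma~\ref{lem:swaps}.}

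The plan is to do a case analysis on the multiset of colors appearing on the six neighbors of $v$ in $\alpha$. These neighbors use colors from the 4-element palette $\{1,\dots,5\}\setminus\{\alpha(v)\}$, so by pigeonhole we are in one of three situations: some color appears at least three times (in which case $v$ is already $\alpha$-good by the first clause of the definition, and we perform zero K-changes), or the color multiplicities are $(2,2,2,0)$, or they are $(2,2,1,1)$. In the latter two cases, I label the neighbors $u_1,\dots,u_6$ in the cyclic order given by the planar embedding around $v$. With this labeling, two monochromatic pairs of neighbors are non-overlapping in the sense of the definition precisely when the corresponding chords of the $6$-cycle $u_1\cdots u_6$ do not interleave. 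A short enumeration shows that $v$ fails to be $\alpha$-good only in the following bad cyclic patterns, up to dihedral symmetry: the pattern $ABCABC$ in the $(2,2,2,0)$ case (the three monochromatic pairs pairwise interleave), and any $(2,2,1,1)$ pattern whose two monochromatic pairs interleave.

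To resolve each bad pattern I perform K-changes between pairs of colors drawn from $\{1,\dots,5\}\setminus\{\alpha(v)\}$, which trivially avoid $\alpha(v)$. The elementary move is to fix a neighbor $u_i$ of color $a$ and a target color $b\in\{1,\dots,5\}\setminus\{\alpha(v),a\}$, then swap the Kempe chain $K$ containing $u_i$ in colors $\{a,b\}$; the effect on $N(v)$ is determined by $K\cap N(v)$. For each bad pattern, I would give a short list of candidate K-changes and check, via a subcase analysis on the possible intersections $K\cap N(v)$, that at least one of them produces an $\alpha'$-good configuration---either by creating a monochromatic triple in $N(v)$, or by converting two interleaving pairs into two non-interleaving ones. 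The only subcases where no single swap suffices are the \emph{maximally symmetric} ones, in which $K$ contains every same-colored neighbor of $v$; there, the swap merely reproduces a rotation of the bad pattern. In such degenerate subcases, a preliminary swap involving the missing palette color (the unused fourth color in the $ABCABC$ case, or one of the singleton colors $C,D$ in the $(2,2,1,1)$ case) first breaks the chain's symmetry, after which a second and, if necessary, a third swap completes the fix.

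The main obstacle is that we cannot dictate which neighbors of $v$ a given Kempe chain connects---this is determined by the global structure of the graph outside $N(v)$---so we must argue existentially that at least one candidate K-change in our short list works. The bound of three K-changes becomes necessary precisely in the most symmetric degenerate subcases, where up to two preparatory swaps may be required to break chain symmetry before the final resolving swap.
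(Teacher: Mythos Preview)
Your plan is correct and follows essentially the same route as the paper: a case analysis on the cyclic colour pattern on $N(v)$, reducing to the four bad patterns (one $(2,2,2,0)$ pattern $ABCABC$ and three interleaving $(2,2,1,1)$ patterns), and resolving each by Kempe swaps in colours from $\{1,\dots,5\}\setminus\{\alpha(v)\}$. The paper's proof is exactly this case analysis, presented pictorially, with the worst case being the cascade $ABCABC \to (2,2,1,1) \to$ another $(2,2,1,1) \to$ good, which is where the bound of three arises.

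The one ingredient you should make explicit when you fill in the details is the planarity/separation argument: when an attempted swap fails because the Kempe chain in colours $\{a,b\}$ joins two neighbours $u_i,u_j$ of $v$, that chain together with the edges $vu_i,vu_j$ forms a cycle separating the remaining neighbours of $v$ into two sides, and any Kempe chain in a disjoint colour pair $\{c,d\}$ cannot cross it. This is precisely what lets you \emph{guarantee} that the next candidate swap succeeds (rather than merely ``arguing existentially that at least one works''), and it is what pins the count at three rather than something larger. Your phrase ``breaks the chain's symmetry'' gestures at this, but the actual mechanism is the Jordan-curve separation, not symmetry-breaking per se.
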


\begin{proof}
  We can assume that $v$ is not $\alpha$-good. Then the neighbors of $v$ can be
  colored in four possible ways (up to permutation of colors). We present a
  visual proof in Figure~\ref{fig1}, where the six neighbors of $v$ are
  represented by circles from left to right in the cyclic ordering around $v$,
  the numbers represent their color, a bold circle represents an attempt to
  perform a Kempe change, a curved edge between two vertices $u$ and $w$
  represents a Kempe chain containing both $u$ and $w$, and dashed arrows
  represent the actual Kempe changes.
\end{proof}

    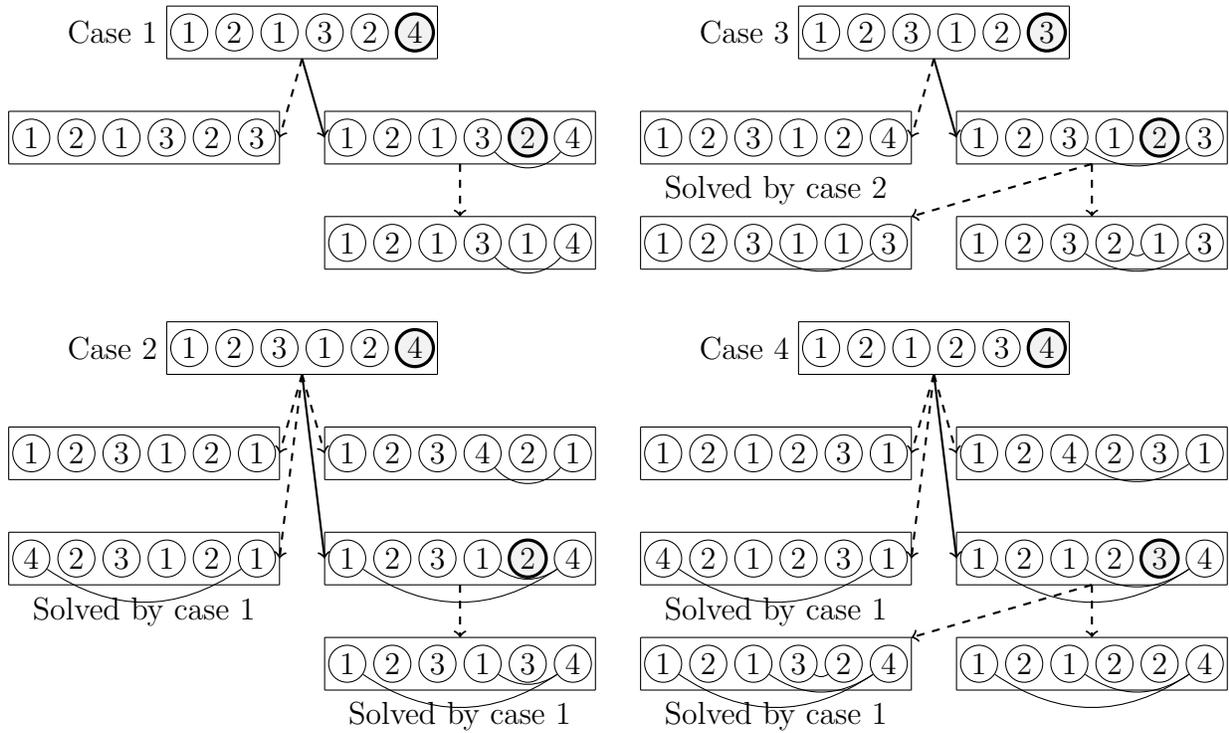
\begin{figure}[hbtp]
        \begin{center}
        \tikzstyle{vertex}=[circle,draw, minimum size=14pt, inner sep=1pt]
        \tikzstyle{change}=[circle,draw, minimum size=14pt, inner sep=1pt, fill=black!5, very thick]

        \begin{tikzpicture}[xscale=0.6,yscale=0.7]
        \node () at (-1.7,0) {Case 1};
        \node (a) at (0,0) [vertex] {1};
        \node (b) at (1,0) [vertex] {2};
        \node (c) at (2,0) [vertex] {1};
        \node (d) at (3,0) [vertex] {3};
        \node (e) at (4,0) [vertex] {2};
        \node (f) at (5,0) [change] {4};
        
        \draw[] (-0.5,0.5) to (5.5,0.5);
        \draw[] (5.5,0.5) to (5.5,-0.5);
        \draw[] (5.5,-0.5) to (-0.5,-0.5);
        \draw[] (-0.5,-0.5) to (-0.5,0.5);

        \begin{scope}[xshift=-3.5cm,yshift=-2cm]
        \node (a) at (0,0) [vertex] {1};
        \node (b) at (1,0) [vertex] {2};
        \node (c) at (2,0) [vertex] {1};
        \node (d) at (3,0) [vertex] {3};
        \node (e) at (4,0) [vertex] {2};
        \node (f) at (5,0) [vertex] {3};
        
        \draw[] (-0.5,0.5) to (5.5,0.5);
        \draw[] (5.5,0.5) to (5.5,-0.5);
        \draw[] (5.5,-0.5) to (-0.5,-0.5);
        \draw[] (-0.5,-0.5) to (-0.5,0.5);
        \end{scope}
        \begin{scope}[xshift=3.5cm,yshift=-2cm]
        \node (a) at (0,0) [vertex] {1};
        \node (b) at (1,0) [vertex] {2};
        \node (c) at (2,0) [vertex] {1};
        \node (d) at (3,0) [vertex] {3};
        \node (e) at (4,0) [change] {2};
        \node (f) at (5,0) [vertex] {4};
        
        \draw[] (-0.5,0.5) to (5.5,0.5);
        \draw[] (5.5,0.5) to (5.5,-0.5);
        \draw[] (5.5,-0.5) to (-0.5,-0.5);
        \draw[] (-0.5,-0.5) to (-0.5,0.5);
        \draw[] (d) to[bend right=45] (f);
        \end{scope}
        \begin{scope}[xshift=3.5cm,yshift=-4cm]
        \node (a) at (0,0) [vertex] {1};
        \node (b) at (1,0) [vertex] {2};
        \node (c) at (2,0) [vertex] {1};
        \node (d) at (3,0) [vertex] {3};
        \node (e) at (4,0) [vertex] {1};
        \node (f) at (5,0) [vertex] {4};
        
        \draw[] (-0.5,0.5) to (5.5,0.5);
        \draw[] (5.5,0.5) to (5.5,-0.5);
        \draw[] (5.5,-0.5) to (-0.5,-0.5);
        \draw[] (-0.5,-0.5) to (-0.5,0.5);
        \draw[] (d) to[bend right=45] (f);
        \end{scope}
        
        \draw[->,thick,dashed] (2.5,-0.5) to (2,-2);
        \draw[->,thick] (2.5,-0.5) to (3,-2);
        \draw[->,thick,dashed] (6,-2.5) to (6,-3.5);
        
        \begin{scope}[yshift=-6cm]
        \node () at (-1.7,0) {Case 2};
        \node (a) at (0,0) [vertex] {1};
        \node (b) at (1,0) [vertex] {2};
        \node (c) at (2,0) [vertex] {3};
        \node (d) at (3,0) [vertex] {1};
        \node (e) at (4,0) [vertex] {2};
        \node (f) at (5,0) [change] {4};
        
        \draw[] (-0.5,0.5) to (5.5,0.5);
        \draw[] (5.5,0.5) to (5.5,-0.5);
        \draw[] (5.5,-0.5) to (-0.5,-0.5);
        \draw[] (-0.5,-0.5) to (-0.5,0.5);
        \end{scope}        
        \begin{scope}[xshift=-3.5cm,yshift=-8cm]
        \node (a) at (0,0) [vertex] {1};
        \node (b) at (1,0) [vertex] {2};
        \node (c) at (2,0) [vertex] {3};
        \node (d) at (3,0) [vertex] {1};
        \node (e) at (4,0) [vertex] {2};
        \node (f) at (5,0) [vertex] {1};
        
        \draw[] (-0.5,0.5) to (5.5,0.5);
        \draw[] (5.5,0.5) to (5.5,-0.5);
        \draw[] (5.5,-0.5) to (-0.5,-0.5);
        \draw[] (-0.5,-0.5) to (-0.5,0.5);
        \end{scope}

        \begin{scope}[xshift=-3.5cm,yshift=-10cm]
        \node () at (2.5,-1) {Solved by case 1};
        \node (a) at (0,0) [vertex] {4};
        \node (b) at (1,0) [vertex] {2};
        \node (c) at (2,0) [vertex] {3};
        \node (d) at (3,0) [vertex] {1};
        \node (e) at (4,0) [vertex] {2};
        \node (f) at (5,0) [vertex] {1};
        
        \draw[] (-0.5,0.5) to (5.5,0.5);
        \draw[] (5.5,0.5) to (5.5,-0.5);
        \draw[] (5.5,-0.5) to (-0.5,-0.5);
        \draw[] (-0.5,-0.5) to (-0.5,0.5);
        \draw[] (a) to[bend right] (f);
        
        \end{scope}
        \begin{scope}[xshift=3.5cm,yshift=-8cm]
        \node (a) at (0,0) [vertex] {1};
        \node (b) at (1,0) [vertex] {2};
        \node (c) at (2,0) [vertex] {3};
        \node (d) at (3,0) [vertex] {4};
        \node (e) at (4,0) [vertex] {2};
        \node (f) at (5,0) [vertex] {1};
        
        \draw[] (-0.5,0.5) to (5.5,0.5);
        \draw[] (5.5,0.5) to (5.5,-0.5);
        \draw[] (5.5,-0.5) to (-0.5,-0.5);
        \draw[] (-0.5,-0.5) to (-0.5,0.5);
        \draw[] (d) to[bend right=45] (f);
        \end{scope}     
        \begin{scope}[xshift=3.5cm,yshift=-10cm]
        \node (a) at (0,0) [vertex] {1};
        \node (b) at (1,0) [vertex] {2};
        \node (c) at (2,0) [vertex] {3};
        \node (d) at (3,0) [vertex] {1};
        \node (e) at (4,0) [change] {2};
        \node (f) at (5,0) [vertex] {4};
        
        \draw[] (-0.5,0.5) to (5.5,0.5);
        \draw[] (5.5,0.5) to (5.5,-0.5);
        \draw[] (5.5,-0.5) to (-0.5,-0.5);
        \draw[] (-0.5,-0.5) to (-0.5,0.5);
        \draw[] (d) to[bend right] (f);  
        \draw[] (a) to[bend right] (f); 
        \end{scope}
        
        \begin{scope}[xshift=3.5cm,yshift=-12cm]
        \node (a) at (0,0) [vertex] {1};
        \node (b) at (1,0) [vertex] {2};
        \node (c) at (2,0) [vertex] {3};
        \node (d) at (3,0) [vertex] {1};
        \node (e) at (4,0) [vertex] {3};
        \node (f) at (5,0) [vertex] {4};
        \node () at (2.5,-1) {Solved by case 1};
        
        \draw[] (-0.5,0.5) to (5.5,0.5);
        \draw[] (5.5,0.5) to (5.5,-0.5);
        \draw[] (5.5,-0.5) to (-0.5,-0.5);
        \draw[] (-0.5,-0.5) to (-0.5,0.5);
        \draw[] (d) to[bend right] (f);  
        \draw[] (a) to[bend right] (f); 
        \end{scope}
        
        \draw[->,thick,dashed] (2.5,-6.5) to (2,-8);
        \draw[->,thick,dashed] (2.5,-6.5) to (2,-10);
        \draw[->,thick,dashed] (2.5,-6.5) to (3,-8);
        \draw[->,thick] (2.5,-6.5) to (3,-10);
        \draw[->,thick,dashed] (6,-10.5) to (6,-11.5);
        \begin{scope}[xshift=14cm,yshift=-6cm]
        \node () at (-1.7,0) {Case 4};
        \node (a) at (0,0) [vertex] {1};
        \node (b) at (1,0) [vertex] {2};
        \node (c) at (2,0) [vertex] {1};
        \node (d) at (3,0) [vertex] {2};
        \node (e) at (4,0) [vertex] {3};
        \node (f) at (5,0) [change] {4};
        
        \draw[] (-0.5,0.5) to (5.5,0.5);
        \draw[] (5.5,0.5) to (5.5,-0.5);
        \draw[] (5.5,-0.5) to (-0.5,-0.5);
        \draw[] (-0.5,-0.5) to (-0.5,0.5);
        \end{scope}
        
        \begin{scope}[xshift=10.5cm,yshift=-8cm]
        \node (a) at (0,0) [vertex] {1};
        \node (b) at (1,0) [vertex] {2};
        \node (c) at (2,0) [vertex] {1};
        \node (d) at (3,0) [vertex] {2};
        \node (e) at (4,0) [vertex] {3};
        \node (f) at (5,0) [vertex] {1};
        
        \draw[] (-0.5,0.5) to (5.5,0.5);
        \draw[] (5.5,0.5) to (5.5,-0.5);
        \draw[] (5.5,-0.5) to (-0.5,-0.5);
        \draw[] (-0.5,-0.5) to (-0.5,0.5);
        \end{scope}
        \begin{scope}[xshift=10.5cm,yshift=-10cm]
        \node (a) at (0,0) [vertex] {4};
        \node (b) at (1,0) [vertex] {2};
        \node (c) at (2,0) [vertex] {1};
        \node (d) at (3,0) [vertex] {2};
        \node (e) at (4,0) [vertex] {3};
        \node (f) at (5,0) [vertex] {1};
        \node () at (2.5,-1) {Solved by case 1};
        
        \draw[] (-0.5,0.5) to (5.5,0.5);
        \draw[] (5.5,0.5) to (5.5,-0.5);
        \draw[] (5.5,-0.5) to (-0.5,-0.5);
        \draw[] (-0.5,-0.5) to (-0.5,0.5);
        \draw[] (a) to[bend right] (f);
        \end{scope}
        \begin{scope}[xshift=17.5cm,yshift=-8cm]

        \node (a) at (0,0) [vertex] {1};
        \node (b) at (1,0) [vertex] {2};
        \node (c) at (2,0) [vertex] {4};
        \node (d) at (3,0) [vertex] {2};
        \node (e) at (4,0) [vertex] {3};
        \node (f) at (5,0) [vertex] {1};
        
        \draw[] (-0.5,0.5) to (5.5,0.5);
        \draw[] (5.5,0.5) to (5.5,-0.5);
        \draw[] (5.5,-0.5) to (-0.5,-0.5);
        \draw[] (-0.5,-0.5) to (-0.5,0.5);
        \draw[] (c) to[bend right] (f);
        \end{scope}
        \begin{scope}[xshift=17.5cm,yshift=-10cm]
        \node (a) at (0,0) [vertex] {1};
        \node (b) at (1,0) [vertex] {2};
        \node (c) at (2,0) [vertex] {1};
        \node (d) at (3,0) [vertex] {2};
        \node (e) at (4,0) [change] {3};
        \node (f) at (5,0) [vertex] {4};
        
        \draw[] (-0.5,0.5) to (5.5,0.5);
        \draw[] (5.5,0.5) to (5.5,-0.5);
        \draw[] (5.5,-0.5) to (-0.5,-0.5);
        \draw[] (-0.5,-0.5) to (-0.5,0.5);
        \draw[] (a) to[bend right] (f);
        \draw[] (c) to[bend right] (f);
        \end{scope}

        \begin{scope}[xshift=17.5cm,yshift=-12cm]
        \node (a) at (0,0) [vertex] {1};
        \node (b) at (1,0) [vertex] {2};
        \node (c) at (2,0) [vertex] {1};
        \node (d) at (3,0) [vertex] {2};
        \node (e) at (4,0) [vertex] {2};
        \node (f) at (5,0) [vertex] {4};
        
        \draw[] (-0.5,0.5) to (5.5,0.5);
        \draw[] (5.5,0.5) to (5.5,-0.5);
        \draw[] (5.5,-0.5) to (-0.5,-0.5);
        \draw[] (-0.5,-0.5) to (-0.5,0.5);
        \draw[] (a) to[bend right] (f);
        \draw[] (c) to[bend right] (f);
        \end{scope}
        \begin{scope}[xshift=10.5cm,yshift=-12cm]
        \node (a) at (0,0) [vertex] {1};
        \node (b) at (1,0) [vertex] {2};
        \node (c) at (2,0) [vertex] {1};
        \node (d) at (3,0) [vertex] {3};
        \node (e) at (4,0) [vertex] {2};
        \node (f) at (5,0) [vertex] {4};
        \node () at (2.5,-1) {Solved by case 1};

        \draw[] (-0.5,0.5) to (5.5,0.5);
        \draw[] (5.5,0.5) to (5.5,-0.5);
        \draw[] (5.5,-0.5) to (-0.5,-0.5);
        \draw[] (-0.5,-0.5) to (-0.5,0.5);
        \draw[] (a) to[bend right] (f);
        \draw[] (c) to[bend right] (f);
        \draw[] (e) to[bend left] (d);
        \end{scope}
         
        \draw[->,thick,dashed] (16.5,-6.5) to (16,-8);
        \draw[->,thick,dashed] (16.5,-6.5) to (17,-8);
        \draw[->,thick,dashed] (16.5,-6.5) to (16,-10);
        \draw[->,thick] (16.5,-6.5) to (17,-10);       
        \draw[->,thick,dashed] (20,-10.5) to (20,-11.5); 
        \draw[->,thick,dashed] (20,-10.5) to (16,-11.5); 

        \begin{scope}[xshift=14cm]
        \node () at (-1.7,0) {Case 3};
        \node (a) at (0,0) [vertex] {1};
        \node (b) at (1,0) [vertex] {2};
        \node (c) at (2,0) [vertex] {3};
        \node (d) at (3,0) [vertex] {1};
        \node (e) at (4,0) [vertex] {2};
        \node (f) at (5,0) [change] {3};
        
        \draw[] (-0.5,0.5) to (5.5,0.5);
        \draw[] (5.5,0.5) to (5.5,-0.5);
        \draw[] (5.5,-0.5) to (-0.5,-0.5);
        \draw[] (-0.5,-0.5) to (-0.5,0.5);

        \end{scope}

        \begin{scope}[xshift=10.5cm,yshift=-2cm]
        \node () at (2.5,-1) {Solved by case 2};
        \node (a) at (0,0) [vertex] {1};
        \node (b) at (1,0) [vertex] {2};
        \node (c) at (2,0) [vertex] {3};
        \node (d) at (3,0) [vertex] {1};
        \node (e) at (4,0) [vertex] {2};
        \node (f) at (5,0) [vertex] {4};
        
        \draw[] (-0.5,0.5) to (5.5,0.5);
        \draw[] (5.5,0.5) to (5.5,-0.5);
        \draw[] (5.5,-0.5) to (-0.5,-0.5);
        \draw[] (-0.5,-0.5) to (-0.5,0.5);

        \end{scope}
        
        \begin{scope}[xshift=17.5cm,yshift=-2cm]
        \node (a) at (0,0) [vertex] {1};
        \node (b) at (1,0) [vertex] {2};
        \node (c) at (2,0) [vertex] {3};
        \node (d) at (3,0) [vertex] {1};
        \node (e) at (4,0) [change] {2};
        \node (f) at (5,0) [vertex] {3};
        
        \draw[] (-0.5,0.5) to (5.5,0.5);
        \draw[] (5.5,0.5) to (5.5,-0.5);
        \draw[] (5.5,-0.5) to (-0.5,-0.5);
        \draw[] (-0.5,-0.5) to (-0.5,0.5);
        \draw[] (c) to[bend right] (f);

        \end{scope}
        
        \begin{scope}[xshift=10.5cm,yshift=-4cm]
        \node (a) at (0,0) [vertex] {1};
        \node (b) at (1,0) [vertex] {2};
        \node (c) at (2,0) [vertex] {3};
        \node (d) at (3,0) [vertex] {1};
        \node (e) at (4,0) [vertex] {1};
        \node (f) at (5,0) [vertex] {3};
        
        \draw[] (-0.5,0.5) to (5.5,0.5);
        \draw[] (5.5,0.5) to (5.5,-0.5);
        \draw[] (5.5,-0.5) to (-0.5,-0.5);
        \draw[] (-0.5,-0.5) to (-0.5,0.5);
        \draw[] (c) to[bend right] (f);

        \end{scope}
        
        \begin{scope}[xshift=17.5cm,yshift=-4cm]
        \node (a) at (0,0) [vertex] {1};
        \node (b) at (1,0) [vertex] {2};
        \node (c) at (2,0) [vertex] {3};
        \node (d) at (3,0) [vertex] {2};
        \node (e) at (4,0) [vertex] {1};
        \node (f) at (5,0) [vertex] {3};
        
        \draw[] (-0.5,0.5) to (5.5,0.5);
        \draw[] (5.5,0.5) to (5.5,-0.5);
        \draw[] (5.5,-0.5) to (-0.5,-0.5);
        \draw[] (-0.5,-0.5) to (-0.5,0.5);
        \draw[] (c) to[bend right] (f);
        \draw[] (d) to[bend right] (e);
        \end{scope}
        
        \draw[->,thick] (16.5,-.5) to (17,-2);
        \draw[->,thick,dashed] (16.5,-0.5) to (16,-2);
        \draw[->,thick,dashed] (20,-2.5) to (16,-3.5);
        \draw[->,thick,dashed] (20,-2.5) to (20,-3.5);
        
        \end{tikzpicture}
        \end{center}
        \caption{The proof of Lemma~\ref{lem:swaps}}
        \label{fig1}
    \end{figure}

We may now successively process each vertex of $I$: for each $v\in I$, we apply
Lemma~\ref{lem:swaps} to make $v$ $\alpha$-good, then we identify vertices in
$N(v)$ so that $v$ has degree at most $4$ in the resulting graph $N'$, and
moreover, $N'-v$ is planar. Note that since we avoid the color of $v$ in a
K-change by Lemma~\ref{lem:swaps}, the vertices of $I$ are never recolored at
any stage of the construction. We now formalize this intuition.

Let $N$ be a plane graph. Let $\varphi$ be a $5$-coloring of $N$. For a vertex
$v$ of $N$ of degree at most $6$ such that $v$ is $\varphi$-good if $d(v) = 6$,
we say that $(N', \varphi')$ is the result of \emph{collapsing} $(N, v,
\varphi)$ if
\begin{itemize}
  \itemsep0em
\item in the case $1 \leq d(v) \leq 4$, $N' = N$ and $\varphi' = \varphi$;
\item in the case $d(v) = 5$, $N'$ is the graph obtained from $N$ by identifying
  two neighbors $u$ and $w$ of $v$ with $\varphi(u) = \varphi(w)$ into a new
  vertex $z$ and $\varphi'$ is the coloring obtained from $\varphi$ by setting
  $\varphi'(z)=\varphi(u)=\varphi(w))$;
\item in the case $d(v) = 6$, $N'$ is the graph obtained from $N$ by either
  \begin{itemize}
    \itemsep0em
  \item identifying three neighbors $u$, $w$, $z$ of $v$ with $\varphi(u) =
    \varphi(w) = \varphi(z)$ into a new vertex $x$. In this case, $\varphi'$ is
    obtained from $\varphi$ by setting $\varphi'(x) = \varphi(u)$, or
  \item for two non-overlapping pairs $(u, w)$ and $(x, z)$ of neighbors of $v$
    with $\varphi(u) = \varphi(w)$ and $\varphi(x) = \varphi (z)$, identifying
    $u$, $w$ into a new vertex $s$ and $x$, $z$ into a new vertex $t$, and
    defining $\varphi'$ by setting $\varphi'(s) = \varphi(u)$ and $\varphi'(t) =
    \varphi(x)$.
  \end{itemize} 
\end{itemize}

Since $v$ is $\varphi$-good, it should be immediate that $N'$ and $\varphi'$ are
well-defined, and that $N' - \{v\}$ is planar. We now focus on showing that we
can extend a given recoloring sequence of the collapsed graph to a recoloring
sequence of the original graph.

\begin{lemma}\label{lem:extend}
  Let $N$ be a plane graph with a $5$-coloring $\varphi$. Let $v$ be a vertex of
  $N$ of degree at most $6$ such that $v$ is $\varphi$-good if $d(v) = 6$. Let
  $(N', \varphi')$ be the result of collapsing $(N, v, \varphi)$. Then every
  sequence of K-changes in $N'- \{v\}$ starting from $\varphi' \restriction
  (V(N') \setminus \{v\})$ extends to a sequence of K-changes in $N$ starting
  from $\varphi$. Moreover, each vertex of $N-v$ changes its color as many times
  as in $N'-v$, and $v$ changes its color at most once every time one of its
  neighbors in $N'$ changes its color.
\end{lemma}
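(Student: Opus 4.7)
The plan is to proceed by induction on the length $\ell$ of the K-change sequence in $N'-v$. The base case $\ell=0$ is trivial (take the empty sequence in $N$). For the inductive step I apply the inductive hypothesis to the first $\ell-1$ K-changes, obtaining a sequence in $N$ that reaches a proper coloring $\psi$ whose restriction to $V(N)\setminus\{v\}$ coincides, under the identifications from collapsing, with the current coloring of $N'-v$. It then suffices to show that a single K-change $K$ in $N'-v$ on a Kempe chain $B$ in colors $\{a,b\}$ extends to a short block of K-changes in $N$ whose net effect on $V(N)\setminus\{v\}$ matches that of $K$, while recoloring $v$ at most once (and only when $K$ itself recolors a neighbor of $v$ in $N'$).

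First I would do the structural analysis. Let $\tilde{B}\subseteq V(N)\setminus\{v\}$ be the preimage of $B$ under the identifications. By maximality of $B$ in the $\{a,b\}$-subgraph of $N'-v$, every $\{a,b\}$-colored neighbor in $N-v$ of a vertex of $\tilde{B}$ also lies in $\tilde{B}$; hence $\tilde{B}$ splits into connected components $B_1,\dots,B_k$ in $N-v$, each a Kempe chain of $N-v$ in colors $\{a,b\}$ (the components are glued through identified vertices to reconstitute $B$ in $N'-v$).

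The key step is a case analysis on $\psi(v)$. When no $B_i$ is $\{a,b\}$-adjacent to $v$ in $N$ (in particular whenever $\psi(v)\notin\{a,b\}$), each $B_i$ is itself a Kempe chain of $N$, so performing the K-change on each $B_i$ in turn does exactly what is needed: every vertex of $\tilde{B}$ is recolored once and $v$ is untouched. Otherwise $\psi(v)\in\{a,b\}$ and some $B_i$ is adjacent to $v$; letting $\hat{B}$ denote $v$'s Kempe chain in $N$ in colors $\{a,b\}$, one has $\hat{B}=\{v\}\cup(\text{adjacent }B_i\text{'s})\cup E$, where $E$ is the possibly empty union of $N-v$ components of the $\{a,b\}$-subgraph that are adjacent to $v$ but correspond in $N'-v$ to Kempe chains other than $B$. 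If $E=\emptyset$, a single K-change on $\hat{B}$ flips $v$ once and swaps the adjacent $B_i$'s; together with K-changes on the non-adjacent $B_i$'s this finishes the extension block.

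The main obstacle, which I expect to be the delicate part, is the case $E\neq\emptyset$: blindly swapping $\hat{B}$ would wrongly recolor the vertices of $E$, violating the required count of vertex recolorings. I plan to address this by a preemptive auxiliary K-change that moves $\psi(v)$ out of $\{a,b\}$, reducing to the easy case. Whenever some color $c\notin\{a,b\}$ is unused by the neighbors of $v$ in $N$ (which almost always holds, since $v$ has at most $4$ neighbors in $N'$ and hence at most $4$ neighbor-colors in $N$), a singleton K-change suffices to swap $v$ to $c$. In the tight configuration where no such $c$ exists (so that the four neighbor-colors are exactly $\{1,\ldots,5\}\setminus\{\psi(v)\}$), one uses instead a non-singleton K-change on a chain in $\{\psi(v),c\}$ for some $c\notin\{a,b\}$, either matching this extra swap against a K-change that must appear later in the prescribed $N'-v$ sequence, or arguing directly that in this tight configuration $E$ is necessarily empty so the single $\hat{B}$-swap already suffices. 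The moreover clause then drops out of the bookkeeping: every vertex of $\tilde{B}$ is recolored exactly once per extension block, matching $N'-v$; and $v$ is recolored at most once per block, chargeable to a neighbor recoloring performed by $K$.
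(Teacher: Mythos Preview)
Your induction and case split are sound, and the argument goes through once you commit to the second alternative in your ``tight configuration''. Indeed, say $\psi(v)=a$; whenever $E\neq\emptyset$, the vertex $v$ has two \emph{distinct} neighbours in $N'$ of colour $b$ (one with image in $B$, one with image outside $B$), so the at most four neighbours of $v$ in $N'$ carry at most three colours, one of which is $b$; hence some $c\notin\{a,b\}$ is absent from $N_{N'}(v)$, and therefore from $N_N(v)$, and a singleton K-change moves $v$ to $c$. Your first alternative (a non-singleton $\{\psi(v),c\}$-swap to be ``matched against a later K-change'') is vague and would in general break the per-vertex count in the moreover clause; drop it.

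The paper reaches the same conclusion with far less bookkeeping by factoring the extension through $N'$ rather than going from $N'-v$ directly to $N$. First extend each K-change of $N'-v$ to one in $N'$: the only obstruction is that $v$ glues the given chain to a foreign one, which happens precisely when two neighbours of $v$ in $N'$ share the second colour; then, exactly as above, only three colours occur on $N_{N'}(v)$ and one precedes the K-change by a singleton recolouring of $v$. Second, simulate each K-change of $N'$ in $N$ by performing one K-change on each preimage component, which is automatic because identified vertices carry equal colours throughout. Passing through $N'$ lets the bound $d_{N'}(v)\le 4$ do all the work and eliminates your $\tilde{B},\hat{B},E$ analysis entirely.
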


\begin{proof}
  Each time a neighbor $w$ of $v$ is recolored in $N'-v$, we may use the same
  K-change in $N'$ unless it involves the color of $v$ and there is another
  neighbor $u$ of $v$ of the same color as $w$.  In this case, since at most 3
  colors appear on $N_{N'}(v)$ (as $v$ has degree 4 in $N'$) we can precede this
  K-change by first recoloring $v$ to a color not appearing in its
  neighborhood. This shows that any sequence of K-changes in $N' - \{v\}$
  extends to a sequence in $N'$.  To extend the sequence to $N$, observe that we
  can simulate in $N$ a K-change in $N'$ at a vertex $w$ by performing a
  K-change at each vertex that was identified to form $w$.  Clearly, each vertex
  of $N-v$ changes its color as many times as in $N'-v$, and $v$ changes its
  color at most once every time one of its neighbors in $N'$ changes its color,
  which concludes.
\end{proof}

We now successively apply Lemma~\ref{lem:extend} to each vertex of $I$ in $G$,
so that every vertex of $I$ becomes $\alpha$-good.
\begin{lemma}\label{lem:main}
  Let $N$ be a plane graph with a $5$-coloring $\varphi$. If $I$ is an
  $\varphi$-monochromatic independent set of vertices of degree at most $6$ in
  $N$, then there is a $5$-coloring $\psi$ of $N$ for which $I$ is $\psi$-good
  and a sequence of K-changes from $\varphi$ to $\psi$ that changes the color of
  each vertex at most $3|I|$ times.
\end{lemma}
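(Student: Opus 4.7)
The plan is to argue by induction on $|I|$, with the base case $|I|=0$ being immediate ($\psi=\varphi$, empty sequence). For the inductive step, fix any $v\in I$. If $d_N(v)\le 5$ there is nothing to do for $v$ (the ``good'' condition is only imposed at degree $6$). Otherwise $d_N(v)=6$, and I invoke Lemma~\ref{lem:swaps} to produce at most three K-changes transforming $\varphi$ into a $5$-coloring $\varphi_1$ in which $v$ is $\varphi_1$-good; these K-changes crucially avoid the color $\varphi(v)$. Because $I$ is $\varphi$-monochromatic with color $\varphi(v)$, no vertex of $I$ is touched, and $I$ remains $\varphi_1$-monochromatic and independent.

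To preserve $v$'s freshly-won goodness while processing the rest of $I$, I would ``commit'' to its witnesses by collapsing: form $(N',\varphi'_1)$ from $(N,v,\varphi_1)$ as defined just above the lemma. The key property of collapsing, encoded by Lemma~\ref{lem:extend}, is that any later K-change sequence in $N'-v$ lifts to a K-change sequence in $N$ recoloring the identified witness block of $v$ in lock-step, so $v$ stays good under arbitrary further work on $N'$. The set $I\setminus\{v\}$ remains $\varphi'_1$-monochromatic, independent, and of degree at most $6$ in $N'$, so by the inductive hypothesis applied to $(N',I\setminus\{v\},\varphi'_1)$ there is a coloring $\psi'$ of $N'$ and a K-change sequence from $\varphi'_1$ to $\psi'$ in which every $u\in I\setminus\{v\}$ is $\psi'$-good in $N'$ and each vertex is recolored at most $3(|I|-1)$ times. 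Lifting this sequence through Lemma~\ref{lem:extend} and prepending the initial at most three K-changes yields the desired sequence in $N$, ending at some coloring $\psi$.

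Three things then need to be checked: first, $v$ is $\psi$-good in $N$ because the lifted sequence recolors its witnesses as a single block, so they stay monochromatic; second, each $u\in I\setminus\{v\}$ is $\psi$-good in $N$ by pulling back a witness in $N'$ (an identified vertex in $N'$ covers a monochromatic block of size at least one in $N$, so the witness remains at least as rich); and third, the $3|I|$ bound on recolorings per vertex follows by combining the inductive bound on $N'$ with the transport guarantees of Lemma~\ref{lem:extend} and the three initial K-changes. The main obstacle I anticipate is verifying the ``non-overlapping pairs'' half of goodness through the lifting: one must check that two non-overlapping pairs witnessed in $N'$ remain non-overlapping in $N$, which uses the built-in planarity of $N'-v$ together with a small planar-embedding argument, and similarly one must justify that $v$'s own recoloring count in the lift (bounded in terms of the number of lifted K-changes touching its at-most-four $N'$-neighbors) stays within the $3|I|$ budget.
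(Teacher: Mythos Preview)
Your outline is close to the paper's argument, but there is a genuine gap in the place you yourself flag as ``the main obstacle''. Two related problems arise.

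First, you apply the induction hypothesis to $(N',I\setminus\{v\},\varphi'_1)$, but $N'$ is in general \emph{not} planar: if $v$ has six neighbours $u_1,\dots,u_6$ in cyclic order with $u_1,u_3,u_5$ colored alike (so $v$ is $\varphi_1$-good via ``three alike''), and there is a vertex $w$ outside adjacent to $u_2,u_4,u_6$, then after identifying $u_1,u_3,u_5$ into $z$ the graph $N'$ contains a $K_{3,3}$ on parts $\{v,z,w\}$ and $\{u_2,u_4,u_6\}$. The lemma therefore does not apply to $N'$. One must instead recurse on the planar graph $N'-v$, which is exactly what Lemma~\ref{lem:extend} is set up for and what your earlier sentence about ``sequences in $N'-v$'' suggests.

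Second, once you recurse on $N'-v$, the lift of Lemma~\ref{lem:extend} may recolor $v$ once for every recoloring of each of its (up to four) $N'$-neighbours, giving a bound of $4\cdot 3(|I|-1)$ for $v$, far above $3|I|$. You note this as something ``one must justify'', but it is not justifiable from your hypotheses as stated. The missing idea, and the point where the paper's proof diverges from yours, is to \emph{strengthen the induction hypothesis}: prove that the produced sequence moreover \emph{avoids the colour of $I$}. Lemma~\ref{lem:swaps} already guarantees this for the initial three K-changes; carrying it through the induction ensures that, in the lift via Lemma~\ref{lem:extend}, no K-change ever involves $\varphi(v)$, so $v$ is never recolored at all. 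This simultaneously gives the $3|I|$ bound for $v$ and preserves $v$'s goodness without any further embedding argument.
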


\begin{proof}
  We shall prove by induction on $|V(N)|$ the stronger claim that there is such
  a sequence that avoids the color of $I$.

  By Lemma~\ref{lem:swaps}, we can assume, up to at most three K-changes, that
  $I$ contains a vertex $v$ of degree at most $5$ or a vertex of degree $6$ that
  is $\varphi$-good. So we can let $(N', \varphi')$ be the result of collapsing
  $(N, v, \varphi)$.  Since $N'' = N' - \{v\}$ is planar, we can apply our
  induction hypothesis to $N''$ (with $I - \{v\}$ instead of $I$ and $\varphi'
  \restriction N''$ instead of $\varphi$) to find a sequence of K-changes in
  $N''$ that avoids the color of $v$ and that transforms $\varphi' \restriction
  N''$ to some $5$-coloring $\varphi''$ of $N''$ so that the following holds:
  \begin{itemize}
  \item $I - \{v\}$ is $\varphi''$-good, and
  \item each vertex changes its color at most $3(|I| - 1)$ times.
  \end{itemize}  
  By Lemma~\ref{lem:extend}, this sequence extends to a sequence in
  $G$. Moreover, $v$ does not change its color (as the sequences avoids
  $\alpha(v)$), and every other vertex changes its color at most $3(|I|-1)+3=
  3|I|$ times. This completes the proof.
\end{proof}

\subsubsection*{Step 3: Induction.}

By Lemma \ref{lem:main}, we can assume that $I$ is $\alpha$-good. Let $n =
|V(G)|$ and $f(n)$ be the maximum number of times a vertex in $G$ is involved in
a K-change. Write $I = \{v_1, \dots, v_m\}$, set $G_1 = G$, $\psi_1 = \alpha$
and, for $i = 2, \dots, m + 1$, let $H_{i-1} = G_i - \{v_1, \dots, v_{i-1}\}$
where $(G_i, \psi_i)$ is the result of collapsing $(G_{i-1}, v_{i-1},
\psi_{i-1})$. Then the final graph $H_{m}$ is planar. Hence, by the induction
hypothesis combined with Lemma \ref{lem:I} (with $H_m$ in place of $G$), there
is a sequence of K-changes from $\psi_{m+1} \restriction V(H_{m})$ to some
$4$-coloring $\gamma'$ of $H_{m}$ on colors $\{1, \dots, 5\} \setminus
\alpha(I)$ where each vertex changes its color at most $f(n-|I|)$ times. By
successively applying Lemma~\ref{lem:extend} to $H_{m}$ etc. up until $H_1$ this
same sequence extends to a sequence in $G$ from $\alpha$ to some $5$-coloring
$\psi$ of $G$ where $\psi \restriction (G - I)$ uses only colors $\{1, \dots,
4\}$. Now, recalling Step 4 verbatim, by definition, $I \subset B$ for some
color class $B$ of $\beta$; so we can recolor each vertex of $B$ to color
$5$. Finally, noting that $G - B$ is a 3-colorable planar graph, we then apply
Proposition~\ref{prop:m1} to recolor the remaining vertices in $G - B$ to their
color in $\beta$.

\subsubsection*{Complexity analysis.}

By Lemma~\ref{lem:main}, the color of each vertex is changed at most $3|I|$
times in order to reach a 5-coloring $\psi$ in which $I$ is $\psi$-good. During
the induction step, by Lemma~\ref{lem:extend}, the color of each vertex in $I$
is changed at most $4f(n - |I|)$ times, while the color of the other vertices
changes at most $f(n-|I|)$ times. By Proposition~\ref{prop:m1}, the final step
requires at most $O(n^2)$ changes of color per vertex. We deduce that $f(n)$
satisfies the recurrence
\[f(n)\leqslant 3|I| + 4f(n-|I|) +O(n^2) \leq 4f\left(n-\frac{n}{140}\right)+O(n^2).\]
The master theorem then yields that each vertex changes its color
$O(n^{\log_{\frac{140}{139}}(4)})=O(n^{194})$ times, hence the sequence has
length at most $O(n^{195})$, which concludes the proof.

\section{Acknowledgements}

This work was supported by Agence Nationale de la Recherche (France) under
research grants ANR DIGRAPHS ANR-19-CE48-0013-01 and ANR prpdfoject GrR
(ANR-18-CE40-0032) and by APVV-19-0308 and VEGA 1/0743/21.

\printbibliography
\end{document}